\documentclass{amsart}

\usepackage[margin=1.4in]{geometry} 
\usepackage{amsthm}
\usepackage{amssymb}
\usepackage{mathtools}
\usepackage{float}
\usepackage[colorlinks=true,linkcolor=blue,citecolor=blue]{hyperref} 
\usepackage{zref-clever} 
\zcsetup{nameinlink,cap}
\usepackage{tikz-cd, tikz} 
\tikzset{
  commutative diagrams/.cd, 
  arrow style=tikz, 
  diagrams={>=stealth}
}
\usetikzlibrary{shapes.geometric, arrows.meta, calc, positioning, svg.path}

\usepackage[shortlabels]{enumitem} 
\usepackage[mathscr]{euscript} 
\usepackage{adjustbox}
\usepackage{bm}
\usepackage{wrapfig}

\usepackage[shortalphabetic]{amsrefs}

\newcommand{\cref}[1]{\zcref{#1}}
\newcommand{\Cref}[1]{\zcref[S]{#1}}

\zcRefTypeSetup{equation}{
  Name-sg-ab =,
  name-sg-ab =,
  Name-pl-ab =,
  name-pl-ab =,
  abbrev,
}

\numberwithin{equation}{section} 
\numberwithin{table}{section} 
\numberwithin{figure}{section} 

\theoremstyle{definition}

\AddToHook{env/definition/begin}{\zcsetup{countertype={equation=definition}}}
\newtheorem{definition}[equation]{Definition}

\AddToHook{env/remark/begin}
{\zcsetup{countertype={equation=remark}}}
\newtheorem{remark}[equation]{Remark}

\AddToHook{env/example/begin}{\zcsetup{countertype={equation=example}}}
\newtheorem{example}[equation]{Example}

\theoremstyle{plain}

\AddToHook{env/lemma/begin}{\zcsetup{countertype={equation=lemma}}}
\newtheorem{lemma}[equation]{Lemma}

\AddToHook{env/proposition/begin}{\zcsetup{countertype={equation=proposition}}}
\newtheorem{proposition}[equation]{Proposition}

\AddToHook{env/theorem/begin}{\zcsetup{countertype={equation=theorem}}}
\newtheorem{theorem}[equation]{Theorem}

\AddToHook{env/corollary/begin}{\zcsetup{countertype={equation=corollary}}}

\AddToHook{env/maintheorem/begin}{\zcsetup{countertype={equation=maintheorem}}}
\newtheorem{maintheorem}{Theorem} 
 
\zcRefTypeSetup{maintheorem}{
	Name-sg = Theorem, 
	name-sg = theorem, 
	Name-pl = Theorems, 
	name-pl = theorems
	}

\AddToHook{env/conjecture/begin}{\zcsetup{countertype={equation=conjecture}}}
\newtheorem{conjecture}[maintheorem]{Conjecture}
 
\zcRefTypeSetup{conjecture}{
	Name-sg = Conjecture, 
	name-sg = conjecture, 
	Name-pl = Conjectures, 
	name-pl = conjectures
	}

\newcommand{\into}{\hookrightarrow}
\newcommand{\onto}{\twoheadrightarrow}

\newcommand{\ul}[1]{\underline{#1}} 

\newcommand{\cA}{\mathcal{A}} 
\newcommand{\uA}{\underline{A}} 
\newcommand{\ufa}{\underline{\mathfrak{a}}} 
\newcommand{\F}{\mathbb{F}} 
\newcommand{\bbS}{\mathbb{S}} 
\newcommand{\R}{\mathbb{R}} 
\newcommand{\Z}{\mathbb{Z}} 
\newcommand{\ulZ}{\ul{\Z}}
\newcommand{\ulF}{\ul{\F_2}}
\newcommand{\ulf}{\ul{f}}
\newcommand{\upi}{\ul{\pi}}

\newcommand{\augmentationideal}{\underline{\mathfrak{c}}} 

\newcommand{\infl}[1]{\bm{ \langle } #1 \bm{ \rangle } }

\newcommand{\qfont}{\mathscr} 
\newcommand{\HQ}{\boldsymbol{\qfont{H}}}
\newcommand{\HQN}{\HQ\QN}

\newcommand{\QN}{\ul{\boldsymbol{\qfont{N}}}} 

\DeclareMathOperator{\res}{res} 
\DeclareMathOperator{\tr}{tr} 
\DeclareMathOperator{\nm}{nm} 
\DeclareMathOperator{\RO}{RO} 
\DeclareMathOperator{\MU}{MU}
\DeclareMathOperator{\MUR}{MU\mathbb{R}}

\newcommand{\orho}{\overline{\rho}} 

\newcommand{\Ab}{{\mathscr{A}\textnormal{b}}} 
\newcommand{\Fin}{{\mathscr{F}\textnormal{in}}} 
\newcommand{\Green}{{\mathscr{G}\textnormal{reen}}} 
\newcommand{\Mack}{{\mathscr{M}\textnormal{ack}}} 
\newcommand{\Tamb}{{\mathscr{T}\textnormal{amb}}} 
\newcommand{\CRing}{\mathscr{C}\textnormal{Ring}} 

\newcommand{\inductioncolor}{orange}
\newcommand{\normcolor}{blue}

\def\dbox{
     \begin{tikzpicture}
        \node at (0,0) [rectangle,draw,inner sep=0.75pt] {$\ast$};
     \end{tikzpicture}
}

\newcommand{\phiLDRf}{
\begin{tikzpicture}[scale=0.6]
\draw[fill=black] (0,0) rectangle (0.5,0.15);
\end{tikzpicture}
}

\newcommand{\fillpent}{
\begin{tikzpicture}
\node[draw, fill=black, inner sep=0.5pt, regular polygon, regular polygon sides=5, scale=0.8] at (0,0) {$\ast$};
\end{tikzpicture}
}

\newcommand{\fillpentdual}{
\begin{tikzpicture}
\node[draw, fill=black, inner sep=0.5pt,regular polygon, regular polygon sides=5, text=white, scale=0.8] at (0,0) {\scriptsize $\ast$};
\end{tikzpicture}
}

\newcommand{\filltrap}{
\begin{tikzpicture}
\node[draw, fill=black, inner sep=0.5pt, trapezium, scale=0.8] at (0,0) {$\ast$};
\end{tikzpicture}
}

\newcommand{\filltrapdual}{
\begin{tikzpicture}
\node[draw, fill=black, inner sep=0.5pt, trapezium, scale=0.8, text=white] at (0,0) {\scriptsize $\ast$};
\end{tikzpicture}
}

\newcommand{\mgw}{\ul{mgw}}

\makeatletter

\pgfdeclareshape{phiZMshape}
{%
  \inheritsavedanchors[from=circle]
  \inheritanchorborder[from=circle]%
  \inheritanchor[from=circle]{north}%
  \inheritanchor[from=circle]{north west}%
  \inheritanchor[from=circle]{north east}%
  \inheritanchor[from=circle]{center}%
  \inheritanchor[from=circle]{west}%
  \inheritanchor[from=circle]{east}%
  \inheritanchor[from=circle]{mid}%
  \inheritanchor[from=circle]{mid west}%
  \inheritanchor[from=circle]{mid east}%
  \inheritanchor[from=circle]{base}%
  \inheritanchor[from=circle]{base west}%
  \inheritanchor[from=circle]{base east}%
  \inheritanchor[from=circle]{south}%
  \anchor{south}{\centerpoint
        \pgf@xa=\radius
        \advance\pgf@y by -2.6\pgf@xa
        }
  \anchor{south west}{\pgfpointorigin \pgf@x=-\radius \pgf@y= -\radius \multiply \pgf@y by 3}
  \anchor{south east}{\pgfpointorigin \pgf@x=\radius \pgf@y= -\radius \multiply \pgf@y by 3}
  \inheritbackgroundpath[from=circle]%
  \foregroundpath{
    \centerpoint%
    \pgf@xc=\pgf@x%
    \pgf@yc=\pgf@y%
    \pgfutil@tempdima=\radius%
    \pgfmathsetlength{\pgf@xb}{\pgfkeysvalueof{/pgf/outer xsep}}%
    \pgfmathsetlength{\pgf@yb}{\pgfkeysvalueof{/pgf/outer ysep}}%
    \ifdim\pgf@xb<\pgf@yb%
      \advance\pgfutil@tempdima by-\pgf@yb%
    \else%
      \advance\pgfutil@tempdima by-\pgf@xb%
    \fi%
    \pgfpathmoveto{\pgfpointadd{\pgfqpoint{\pgf@xc}{\pgf@yc}}{\pgfqpoint{0\pgfutil@tempdima}{-1\pgfutil@tempdima}}}%
    \pgfpathlineto{\pgfpointadd{\pgfqpoint{\pgf@xc}{\pgf@yc}}{\pgfqpoint{0\pgfutil@tempdima}{-1.6\pgfutil@tempdima}}}%
    \pgfusepath{fill,stroke}%
    \pgfpathcircle{\pgfpointadd{\pgfqpoint{\pgf@xc}{\pgf@yc}}{\pgfqpoint{0pt}{-2.1\pgfutil@tempdima}}}{0.4\pgfutil@tempdima}%
    \pgfusepath{stroke}%
  }%
}%

\makeatother

\def\phiZM{
    \begin{tikzpicture}[x=1.0ex,y=1.0ex]
      \node[phiZMshape,draw,inner sep=0.4ex] at (0,0) {};
    \end{tikzpicture}
}

\def\mgwsymbol{
    \begin{tikzpicture}
      \node[fill = black, regular polygon, regular polygon sides=5,  regular polygon rotate=180, minimum width=0pt,  inner sep = 0.3ex,scale=1.7] at (0,0) {};
    \end{tikzpicture}
}

\newcommand{\sfrac}[2]{{}^{#1}\!/_{\!#2}} 

\begin{document}

\title{The Zero Slice of Quaternionic Real Bordism}
\author{Bertrand J. Guillou}
\email{bertguillou@uky.edu}
\author{Jesse Keyes}
\email{jdke228@uky.edu}
\author{David Mehrle}
\email{davidm@uky.edu}
\address{Department of Mathematics, University of Kentucky, Lexington, KY, U.S.A.}


\begin{abstract}
Using the Hill-Hopkins-Ravenel norm, one can produce a $Q_8$-spectrum $N_{C_2}^{Q_8} \MUR$. Working towards a computation of the slice spectral sequence for $N_{C_2}^{Q_8} \MUR$, we compute the zero slice of $N_{C_2}^{Q_8} \MUR$ and a bigraded subring of the $\RO(G)$-graded homotopy Mackey functors of this slice. \end{abstract}

\maketitle

\setcounter{tocdepth}{1}
\tableofcontents

\section{Introduction}
\label{Intro}

One of the guiding problems of stable homotopy theory is to compute the stable homotopy groups of spheres, also known as the homotopy groups of the sphere spectrum $\bbS  \coloneqq \Sigma^\infty S^0$.
Chromatic homotopy theory suggests to understand this via a 
tower
\[
	\bbS \simeq \mathrm{holim}\left(  \cdots \to L_{E(n)}\bbS \to L_{E(n-1)}\bbS \to \cdots \to L_{E(0)}\bbS\right),
\]
where $L_{E(n)} \bbS$ denotes the Bousfield localization of $\bbS$ with respect to the Johnson-Wilson theory $E(n)$. The difference between successive stages in this filtration is controlled by $L_{K(n)}\bbS$, the localization with respect to height $n$ Morava $K$-theory $K(n)$.

Thus the spectra $L_{K(n)}\bbS$ have become central objects of study. These are rich objects, whose rational homotopy groups have only recently been computed \cite{BSSW}.
One means of studying the $K(n)$-local sphere comes from the identification of $L_{K(n)}\bbS$ with the homotopy fixed points of the action of the Morava stabilizer group $\mathbb{G}_n$ on the Lubin-Tate spectrum $E_n$ \cite{DH}.

Given this identification, we may study $L_{K(n)} \bbS$ by means of the corresponding homotopy fixed points spectral sequence, which has signature 
\[ 
	E_2 = H^*_{\text{cts}}(\mathbb{G}_n; \pi_*E_n) \Rightarrow \pi_*L_{K(n)}\bbS. 
\]
Computing the $E_2$-page of this spectral sequence requires wrestling with the mysterious action of $\mathbb{G}_n$ on $E_n$.

To sidestep the action of $\mathbb{G}_n$ on $E_n$, one can restrict to finite subgroups $H \leq \mathbb{G}_n$ and approximate $L_{K(n)} \bbS \simeq E_n^{h\mathbb{G}_n}$ by $E_n^{hH}$. 
A theorem of  Hewett can be applied to obtain a complete description of the maximal 
finite subgroups $H$ of
$\mathbb{G}_n$ in terms of height \cite{Hewett} (see also \cite{Bu}). 

At heights not congruent to two modulo four, the maximal $2$-subgroup of $\mathbb{G}_n$ is a cyclic $2$-group $C_{2^k}$. Among other reasons, this has encouraged the study of equivariant lifts of key players in chromatic homotopy theory when the group of equivariance is a cyclic $2$-group. For example,  the landmark resolution of the Kervaire Problem by Hill-Hopkins-Ravenel was given by analyzing the slice spectral sequence of $N_{C_2}^{C_{2^k}} \MUR$ \cite{HHR}. Here, $\MUR$ is a $C_2$-equivariant lift of the complex bordism spectrum $\MU$ and $N_{C_2}^{C_{2^k}}(-)$ is the multiplicative norm construction for equivariant spectra. 

At heights congruent to two modulo four, the maximal 2-subgroup of $\mathbb{G}_n$ is given by $Q_8$, the quaternion group. Quaternion-equivariant homotopy theory is less well-studied than $C_{2^k}$-equivariant homotopy theory, but we expect that it is no less important to the project of chromatic homotopy theory. In particular, we would like to understand the slice spectral sequence of the quaternionic norm of real bordism $N_{C_2}^{Q_8} \MUR$.

Computing the slice spectral sequence for an arbitrary equivariant spectrum $X$ requires understanding the slices of $X$ and the homotopy of these slices.
The simplest case is the zero slice of $X$, which is the equivariant Eilenberg--Mac~Lane spectrum $H\upi_0 X$.
When $X \simeq N_H^G Y$ with $Y$ connective, Ullman gives an identification \cite{U}*{Corollary~3.2}
\[
	\upi_0 N_H^G Y \cong N_H^G \upi_0 Y,
\]
where the norm in the right side of the equation is the norm for Mackey functors \cites{HM,Ho,U}.

\begin{wrapfigure}{r}{0.465\textwidth}
	\vspace*{-0.5cm}
		\begin{tikzpicture}
			\clip (-3.5,-0.2) rectangle (3.5,6.5);
			\node(Q) at (0,6) {$\Z \oplus \F_2^2$};
			\node(I) at (-3,4) {$\Z$};
			\node(J) at (0,4) {$\Z$};
			\node(K) at (3,4) {$\Z$};
			\node(C) at (0,2) {$\Z$};
			\node(e) at (0,0) {$\Z$};

			\draw[->>,bend right=10] (Q) to node[above left,scale=0.5] {$\begin{bmatrix} 1 & 0 & 0 \end{bmatrix}$} (I);
			\draw[->>,bend right=10] (Q) to node[left, scale=0.5] {$\begin{bmatrix} 1 & 0 & 0 \end{bmatrix}$} (J);
			\draw[->>,bend right=10] (Q) to node[fill=white, scale=0.5] {$\begin{bmatrix} 1 & 0 & 0 \end{bmatrix}$} (K);
			\draw[->,bend right=10] (I) to node[fill=white,scale=0.65] {$1$} (C);
			\draw[->,bend right=20] (J) to node[fill=white,scale=0.65] {$1$} (C);
			\draw[->,bend right=10] (K) to node[fill=white,scale=0.65] {$1$} (C);
			\draw[->,bend right=20] (C) to node[fill=white,scale=0.65] {$1$} (e);

			\draw[->,bend right=10,\inductioncolor] (I) to node[fill=white,scale=0.5] {$\begin{bmatrix} 2 \\ 1\\ 0\end{bmatrix}$} (Q);
			\draw[->,bend right=10,\inductioncolor] (J) to node[right, scale=0.5] {$\begin{bmatrix} 2 \\ 1\\ 1\end{bmatrix}$} (Q);
			\draw[->,bend right=10,\inductioncolor] (K) to node[above right,scale=0.5] {$\begin{bmatrix} 2 \\ 0\\ 1\end{bmatrix}$} (Q);
			\draw[->,bend right=10,\inductioncolor] (C) to node[fill=white,scale=0.65] {$2$} (I);
			\draw[->,bend right=20,\inductioncolor] (C) to node[fill=white,scale=0.65] {$2$} (J);
			\draw[->,bend right=10,\inductioncolor] (C) to node[fill=white,scale=0.65] {$2$} (K);
			\draw[->,bend right=20,\inductioncolor] (e) to node[fill=white,scale=0.65] {$2$} (C);
		\end{tikzpicture}
		\caption{The Mackey functor $\QN = N_{C_2}^{Q_8}\ulZ$.}
		\label{Q8norm Mackey functor}
	\vspace*{-1cm}
\end{wrapfigure}

In the case of $N_{C_2}^{Q_8}\MUR$, the Mackey functor $\upi_0 \MUR$ is the constant $C_2$-Mackey functor $\ulZ$, and the zero slice is the Eilenberg--Mac~Lane spectrum of the $Q_8$-Mackey functor $N_{C_2}^{Q_8}\ulZ$. We compute this Mackey functor, thereby computing the zero slice of $N_{C_2}^{Q_8} \MUR$.

\begin{maintheorem}[{\zcref{thm:NZ}}]
\label{zero slice of MUR}
The zero slice of $N_{C_2}^{Q_8} MU\R$ is $\HQ \QN$, where $\QN = N_{C_2}^{Q_8}\ulZ$ is the Mackey functor in \zcref{Q8norm Mackey functor}.
\end{maintheorem}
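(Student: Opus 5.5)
The first step is to identify the zero slice with an Eilenberg--Mac~Lane spectrum. The spectrum $\MUR$ is connective, and the norm $N_{C_2}^{Q_8}$ preserves connectivity, so $N_{C_2}^{Q_8}\MUR$ is a connective $Q_8$-spectrum. For a connective $G$-spectrum $X$, the zero slice $P^0_0X$ is $\HQ\upi_0X$. By Ullman's result \cite{U}*{Corollary~3.2}, $\upi_0 N_{C_2}^{Q_8}\MUR \cong N_{C_2}^{Q_8}\upi_0\MUR$. Since $\upi_0\MUR = \ulZ$, the zero slice is $\HQ N_{C_2}^{Q_8}\ulZ$. The real content of the theorem is therefore the explicit computation of the Mackey functor $N_{C_2}^{Q_8}\ulZ$ shown in \zcref{Q8norm Mackey functor}.

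To compute it, I would use the fact (Hoyer, Mazur) that the Mackey norm of a commutative Green functor carries a Tambara structure. As a functor from $C_2$-Tambara functors to $Q_8$-Tambara functors, $N_{C_2}^{Q_8}$ is left adjoint to restriction. It sends the Burnside functor $\uA_{C_2}$ to $\uA_{Q_8}$. Because it is a left adjoint, it preserves coequalizers. Write
\[
\ulZ = \uA_{C_2}/\ufa,
\]
where $\ufa$ is the Tambara ideal generated by $[C_2]-2 \in \uA_{C_2}(C_2/C_2)$. Then $N_{C_2}^{Q_8}\ulZ$ is the quotient of $\uA_{Q_8}$ by the Tambara ideal $\mathfrak{I}$ generated by the element $x = [C_2/e]-2$. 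Here $x$ lies in $\uA_{Q_8}(Q_8/C_2)$, which is the level corresponding to the unique central $C_2 \le Q_8$.

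Next I would compute $\mathfrak{I}$ level by level, using the known Burnside rings of $e$, $C_2$, the three subgroups $C_4$, and $Q_8$. The steps in order are:
\begin{enumerate}
\item At $e$, nothing is killed, so the value is $\Z$.
\item At $C_2$, $\uA(C_2) = \Z\{1,[C_2/e]\}$ is cut down to $\Z$ by $x$, and restriction to $e$ becomes the identity.
\item At each $C_4$, the ideal contains $\tr_{C_2}^{C_4}x$ and $\nm_{C_2}^{C_4}x$, along with ring multiples of these. I expect the quotient to collapse to $\Z$, with transfer from $C_2$ equal to multiplication by $2$.
\item At $Q_8$, the ideal is generated by the transfers $\tr_{C_4}^{Q_8}$ and norms $\nm_{C_4}^{Q_8}$ of the $C_4$-level generators, by $\tr_{C_2}^{Q_8}x$, $\nm_{C_2}^{Q_8}x$, and by multiplication with the six basis elements of $\uA(Q_8)$.
\end{enumerate}
I would evaluate the norms with the standard formula for norms in Burnside rings, expanding $\nm(a-b)$ via the exponential (Tambara) formula for norms of differences and computing marks. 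I expect the quotient at $Q_8$ to be $\Z\oplus\F_2^2$, generated by $1$ together with the classes of $[Q_8/C_4^{(i)}]$ suitably normalized. The transfers and restrictions should then be read off as in the figure.

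The main obstacle is the top level: computing norms of differences to $Q_8$, and showing that the resulting ideal of $\uA(Q_8)\cong\Z^6$ has quotient exactly $\Z\oplus\F_2^2$ rather than something smaller. For the lower bound I would construct an explicit $Q_8$-Tambara functor with the values in the figure. I would then check that its restriction to $C_2$ receives a Tambara map from $\ulZ$ (essentially because $\ul{\Z}$ lies at every $C_2$-level). By adjunction this gives a surjection $N_{C_2}^{Q_8}\ulZ \onto \QN$. Combining this with the upper bound on the quotient from the generators found above would yield the isomorphism.
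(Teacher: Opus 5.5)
Up to the top level, your plan is the paper's proof. It uses Ullman's identification $\upi_0 N_{C_2}^{Q_8}\MUR\cong N_{C_2}^{Q_8}\ulZ$, the Tambara norm as a left adjoint whose underlying Mackey functor is the Mackey norm, $n_{C_2}^{Q_8}\ulZ\cong\uA/\ufa$ with $\ufa$ generated by $t-2\in\uA(Q_8/C_2)$, and $\ufa=\augmentationideal$ at every proper level.

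Two small fixes:
\begin{itemize}
\item The input is not that Mackey norms of commutative Green functors are Tambara; that is false already for $H=G$. What you need is that on Tambara functors the left adjoint $n_H^G$ has underlying Mackey functor $N_H^G$, which is the statement you use in your next sentence.
\item Turning your coequalizer into a quotient by the ideal generated at $Q_8/C_2$ uses $n_{C_2}^{Q_8}\uA[x_{C_2/C_2}]\cong\uA[x_{Q_8/C_2}]$. This follows from the adjunction by Yoneda.
\end{itemize}
Computing norms through marks is a clean alternative to the paper's Hill--Mazur manipulations. For instance, $\nm_{C_2}^{Q_8}(t-2)$ has marks $(0,16,4,4,4,-2)$ at $e,C_2,I,J,K,Q_8$, which recovers the paper's formula without extending Hill--Mazur to $Q_8$.

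The genuine difference is how you certify the level $Q_8/Q_8$. The paper determines $\ufa(Q_8/Q_8)$ exactly. It uses Nakaoka's description of principal Tambara ideals (every element has the form $\tr(y\cdot\nm(\res(t-2)))$) to get five generators. It then needs only a ring map $q$ on $A(Q_8)$ whose kernel those five elements generate. You use only that certain elements lie in $\ufa$. You get the reverse bound from a surjection $n_{C_2}^{Q_8}\ulZ\onto\QN$ onto a Tambara functor you build, and finish because a surjection $\Z\oplus\F_2^2\onto\Z\oplus\F_2^2$ is an isomorphism. This is sound and avoids Nakaoka.

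The cost is constructing $\QN$ as a Tambara functor. You must write down $\nm_{C_4}^{Q_8}$, which is necessarily $n\mapsto n^2+\binom{n}{2}\bar u_{C_4}$ with $\bar u_{C_4}$ the image of $[\sfrac{Q_8}{C_4}]-2$. You must then check every Tambara axiom, including the exponential formula for $\nm_{C_4}^{Q_8}\tr_{C_2}^{C_4}$. That is much more work than checking that $q$ kills five elements, and your proposal does not say how it would be done.

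Note also that your step (4) already asserts that $\ufa(Q_8/Q_8)$ is exactly the ring ideal generated by the listed elements. If that holds, the lower-bound construction is superfluous. The assertion can be proved without Nakaoka. Consider the family equal to $\augmentationideal$ at proper levels and, at the top, to the ring ideal generated by $\tr_L^{Q_8}$ and $\nm_L^{Q_8}$ of all elements of $\augmentationideal(Q_8/L)$ for $L<Q_8$. This family is closed under all Tambara operations, contains $t-2$, and is contained in $\ufa$, so it equals $\ufa$. The Hill--Mazur formula then cuts the generators down to a finite list, and the rest is linear algebra in $A(Q_8)\cong\Z^6$.
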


Computing the norm $N_{C_2}^{Q_8} \ulZ$ is \emph{a priori} hindered by the fact that $N_H^G \colon \Mack^{H} \to \Mack^G$ is neither a left adjoint nor an additive functor (see \cref{Mackey norm is not a left adjoint}). We instead compute this norm by recognizing that $\ulZ$ is a Tambara functor, where the norm $n_H^G \colon \Tamb^H \to \Tamb^G$ \emph{is} a left adjoint. Writing $\ulZ$ as a pushout in the category of $C_2$-Tambara functors, we interpret $n_H^G\ulZ$ as the quotient of the $Q_8$-Burnside Tambara functor by a certain Tambara ideal, which we can compute. Finally, a theorem of Hoyer shows that the Mackey functor underlying $n_H^G\ulZ$ is $N_H^G\ulZ$. 

\begin{remark}
	Although the norm functor $\widehat n_H^G \colon \Green^H \to \Green^G$ is also a left adjoint, it does not coincide with $N_H^G$ on the underlying Mackey functors. Indeed, Green functors do not satisfy the hypotheses of \cite{BH2018}*{Theorem~6.15}, and the Mackey functor underlying $\widehat n_e^{C_2}(\Z/2)$ is not $N_{e}^{C_2}(\Z/2)$. In particular, $\widehat n_e^{C_2}(\Z/2)$ is isomorphic $\Z \oplus \Z/2$ at the $C_2/C_2$ level, while $N_e^{C_2}(\Z/2)$ is isomorphic to $\Z/4$. 
	Therefore, the strategy for computing norms of Mackey functors outlined in the previous paragraph requires Tambara functors; it will not work with Green functors. 
\end{remark}

With the understanding of the zero slice of $N_{C_2}^{Q_8}\MUR$ provided by \cref{zero slice of MUR}, we compute a bigraded sector of its $\RO(Q_8)$-graded homotopy Mackey functors.
As in \cite{GKM}, we denote by $\blacklozenge$ a grading over the subgroup $\Z\{1,\orho\}$ of $RO(Q_8)$.

\begin{maintheorem}
\label{thm:htpHN}
The $\blacklozenge$-graded homotopy Mackey functors of $\HQ \QN$ are as depicted in \zcref{fig:posconeN,fig:negconeN}. In degrees $n + k \orho$ with $n \neq 0, 1$, the homotopy Mackey functors of $\HQ \QN$ are isomorphic to those of $\HQ \ulZ$.
\end{maintheorem}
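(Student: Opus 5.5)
The plan is to compare $\HQ\QN$ with $\HQ\ulZ$ through a short exact sequence of $Q_8$-Mackey functors. \zcref{Q8norm Mackey functor} shows that $\QN$ agrees with the constant Mackey functor $\ulZ$ at every level except $Q_8/Q_8$, where it has an extra $\F_2^2$. First I will write down a natural map $\QN \to \ulZ$. It is the identity at all proper levels and the projection $\Z\oplus\F_2^2 \to \Z$ onto the first coordinate at the top. I will check that this is a map of Mackey functors: restrictions are $[1\ 0\ 0]$, and transfers project to $2$. Its kernel is the Mackey functor $\ul{K}$ concentrated at $Q_8/Q_8$ with value $\F_2^2$. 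This gives a short exact sequence $0 \to \ul{K} \to \QN \to \ulZ \to 0$ and a cofiber sequence
\[
\HQ\ul{K} \to \HQ\QN \to \HQ\ulZ.
\]

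Next I will compute the $\blacklozenge$-graded homotopy of $\HQ\ul{K}$. Since $\ul{K}$ is concentrated at the top level, it is inflated from the trivial group along $Q_8 \to Q_8/Q_8$. Its value at $Q_8/Q_8$ is $\F_2 \otimes (Q_8/[Q_8,Q_8])$ (both are $\F_2^2$), which suggests identifying it with a geometric-fixed-point-type object. Concretely, $\upi_{V}\HQ\ul{K}$ is computed by maps out of $S^V$ into $\HQ\ul{K}$. These only see the $Q_8$-fixed level, namely the cellular chains of $(S^{V})$ relative to the singular part. For $V = n + k\orho$, with $\orho$ the reduced regular representation, the geometric fixed points of $S^{k\orho}$ are $S^0$. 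I therefore expect $\upi_{n+k\orho}\HQ\ul{K}$ to be $\ul{K}$ when $n=0$ and zero otherwise. This would follow from $\widetilde{E\mathcal{P}}\wedge \HQ\ul{K} \simeq \HQ\ul{K}$, where $\mathcal{P}$ is the family of proper subgroups, together with $\widetilde{E\mathcal{P}}\wedge S^{k\orho}\simeq \widetilde{E\mathcal{P}}$.

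With this in hand, the long exact sequence in $\upi_{n+k\orho}$ yields isomorphisms $\upi_{n+k\orho}\HQ\QN \cong \upi_{n+k\orho}\HQ\ulZ$ for $n\ne 0,1$, which is the second claim. In degrees $n=0,1$ there is an exact sequence
\[
0\to \upi_{1+k\orho}\HQ\QN \to \upi_{1+k\orho}\HQ\ulZ \xrightarrow{\partial} \ul{K} \to \upi_{k\orho}\HQ\QN \to \upi_{k\orho}\HQ\ulZ \to 0.
\]
For the $\HQ\ulZ$ terms I will use the computations of $\upi_\blacklozenge \HQ\ulZ$ from \cite{GKM}.

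The main obstacle is determining the connecting map $\partial$ and resolving the extensions at the top level. I would attack this with cellular chain complexes. Build $S^{k\orho}$ (or its dual, for $k<0$) as a $Q_8$-CW complex, and take the chain complex of Mackey functors $C_*(S^{k\orho};\QN)$. Compute its homology at the $Q_8/Q_8$ level directly, using the explicit transfers $\begin{bmatrix} 2\\1\\0\end{bmatrix}$ and so on from the figure. These transfers detect whether the $\F_2^2$ classes are hit by transfers from the cells $Q_8/C_4$ in dimension $1$. I expect $\partial$ to depend on the sign of $k$. In the positive cone the top cell contributes, so the $\F_2^2$ survives in degree $k\orho$ or is partially killed. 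In the negative cone, duality shifts it into degree $1+k\orho$. I would determine these cases separately, producing \zcref{fig:posconeN,fig:negconeN}.
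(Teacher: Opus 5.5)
Your treatment of the second claim is correct and matches the paper's: the kernel $\infl{\F_2}^2$ is inflated from $Q_8/Q_8$, so $\HQ\infl{\F_2}^2$ is unchanged by $S^{k\orho}\wedge -$ (your $\widetilde{E\mathcal{P}}$ argument), and the long exact sequence gives the isomorphism for $n\neq 0,1$. The gap is in the first claim. Everything in \zcref{fig:posconeN,fig:negconeN} that is not copied from $\HQ\ulZ$ lies on the lines $n=0,1$. There you stop at the four-term exact sequence and leave $\partial$ undetermined. Your guessed outcome is also not what happens. Note that $C_*(S^{k\orho};\QN)$ computes $\upi_{*-k\orho}$, not $\upi_{*+k\orho}$, which may be why the two cones get swapped.

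What is missing is the content of \zcref{prop:manual}.

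\emph{Case $k>0$.} Here $\upi_{n+k\orho}\HQ\ulZ=0$ for $n\in\{0,1\}$. So $\partial=0$ trivially, $\upi_{k\orho}\HQN\cong\infl{\F_2}^2$ and $\upi_{1+k\orho}\HQN=0$: the $\F_2^2$ survives intact.

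\emph{Case $k<0$.} The key input is that $\upi_0\Sigma^{\orho}H\ul{M}$ is the inflation of $\ul{M}(Q_8/Q_8)/\mathrm{tr}$ (\zcref{lem:pi0Sigorho}). This is your $H_0$ computation using the $1$-cells of type $Q_8/I$, $Q_8/J$, $Q_8/K$, and it iterates to all $k<0$ by connectivity. In $\QN(Q_8/Q_8)=\Z\oplus\F_2^2$ the transfers of $1$ from $I,J,K$ are $(2,1,0)$, $(2,1,1)$, $(2,0,1)$. Their differences span the $\F_2^2$, so both transfer quotients are $\Z/2$. Hence the augmentation gives an isomorphism $\upi_{k\orho}\HQN\cong\infl{\F_2}\cong\upi_{k\orho}\HQ\ulZ$. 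It follows that $\infl{\F_2}^2\to\upi_{k\orho}\HQN$ is zero and $\partial$ is surjective. So the $\F_2^2$ is entirely absorbed: $\upi_{1+k\orho}\HQN=\ker\partial$ is \emph{smaller} than $\upi_{1+k\orho}\HQ\ulZ$, rather than acquiring a shifted copy.

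For $k=-1$ this gives $\ker(\ul{mg}\onto\infl{\F_2}^2)=\phi_{IJK}^*\ulf$, since any such surjection is an isomorphism at $Q_8/Q_8$ and zero below. For $k\le -2$ you still must show $\ker\partial=0$. The paper does this by suspending the connected-cover sequence of $\Sigma^{\orho}\HQN$ by $\orho$ and using $\Sigma^{\orho+1}\HQ\phi_{IJK}^*\ulf\simeq\Sigma^{2}\HQ\phi_{IJK}^*\ulF$, which has no $\upi_1$. Your cellular approach could replace this only if you actually build the $2$-skeleton of $S^{m\orho}$ and carry out the computation at every level.
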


\zcref{thm:htpHN} gives input to the slice spectral sequence for $N_{C_2}^{Q_8} MU\R$. Computing this spectral sequence would also require knowledge of the higher slices. The Slice Theorem of Hill-Hopkins-Ravenel describes the slices of $N_{C_2}^{C_{2^n}} MU\R$ \cite{HHR}*{Theorem~6.1}. Their results says that the slices are all a wedge sum of terms  of the form $ {C_{2^n}}_+\wedge_{L} \Sigma^{i\rho_L} H \ulZ$, for $L$ a subgroup of $C_{2^n}$. We conjecture a similar formula holds for the quaternionic norm of $\MUR$.

\begin{conjecture}
\label{SliceConj}
The slices of $N_{C_2}^{Q_8} MU\R$ are of the form
\[
	P_n^n N_{C_2}^{Q_8} MU\R \simeq \bigvee {Q_8}_+\wedge_{L} \Sigma^{i\rho_L} H N_{C_2}^{L} \ulZ,
\]
for subgroups $L$ of $Q_8$.
\end{conjecture}

As evidence for this conjecture, we note that our conjectural slices have the correct form upon restriction to any $C_4$-subgroup, by the Slice Theorem of \cite{HHR}. Indeed, for any order 4 subgroup $C \cong C_4$ of $Q_8$, then the restriction of $N_{C_2}^{Q_8}MU\R$ to $C$ is 
\[
	\downarrow^{Q_8}_{C} N_{C_2}^{Q_8} MU\R \simeq \left( N_{C_2}^{C_4} MU\R \right)^{\wedge 2} \!\! \simeq \,\, \downarrow^{C_8}_{C_4} N_{C_2}^{C_8} MU\R
\]

\bigskip

While none of {the} calculations in this paper depend on any computer algebra system, we have written some code for algorithmically computing restrictions, norms, and transfers in Burnside Tambara functors, which may be of independent interest \cite{GKMcode}. This code supplements the material in \cref{quaternionic norm of constant Z} below and verifies the calculations therein. 

\subsection*{Conventions}

\begin{itemize}
	\item We write $Q = Q_8$ for the Quaternion group, and $I,J,K$ for its three subgroups of order $4$, and $C_2$ for the subgroup of order 2. 
	\item We write $\phi_H^*$ for the inflation from $Q_8/H$-Mackey functors to $Q_8$-Mackey functors. We write $\phi_{IJK}^*$ as shorthand for the sum $\phi_I^* \oplus \phi_J^* \oplus \phi_K^*$ of inflations from $C_2$-Mackey functors.
	\item We use the notation $\HQ$ for a $Q_8$-equivariant Eilenberg-Mac~Lane spectrum.
\end{itemize}

\subsection*{Acknowledgements}

We thank Mike Hill, XiaoLin Danny Shi, and Zhouli Xu for helpful discussions
and David Chan, Ben Spitz, and Chase Vogeli for help with code.
We acknowledge support from the National Science Foundation via DMS-2403798 and DMS-2135884
and from the Simons Foundation via MPS-TSM-00007067.

\section{The Quaternionic Norm of \texorpdfstring{$\underline{\mathbb{Z}}$}{Constant Z}}
\label{quaternionic norm of constant Z}

As in \cite{GKM}, we compute the Mackey functor $N_{C_2}^{Q_8} \underline{\Z}$ as a quotient of the Burnside Tambara functor by a certain Tambara ideal. 

Recall that a $G$-Mackey functor $\underline{M}$ is a product-preserving functor $\underline{M} \colon \cA^G \to \Ab$, where $\cA^G$ is the Burnside category, and a morphism of $G$-Mackey functors is a natural transformation. We write $\Mack^G$ for the category of $G$-Mackey functors {and $\Fin^G$ for the category of finite $G$-sets and $G$-equivariant functions.}

\begin{definition}[{\cites{Ho,HM}}]
	Let $H$ be a subgroup of a finite group $G$. The norm functor $N_H^G \colon \Mack^H \to \Mack^G$ is  left Kan extension along  coinduction \mbox{$\Fin^H(G,-) \colon \cA^H \to \cA^G$.}
\end{definition}

\begin{remark}
\label{Mackey norm is not a left adjoint}
The norm $N_H^G$ is not a left adjoint. 
Indeed, 
the functor $\Fin^H(G,-)$, considered as a functor from finite $H$-sets to finite $G$-sets, preserves pullbacks but {\it not} disjoint unions. It follows that coinduction induces a {\it non-additive} functor $\cA^H \to \cA^G$. 
Therefore, it cannot induce a functor from $\Mack^G$ to $\Mack^H$ via pullback, and thus the Kan extension $N_H^G$ has no right adjoint (\cite{Ho}*{discussion following Definition 2.3.2}).
\end{remark}

Computing norms of Mackey functors can be difficult because the norm is neither an additive functor nor a left adjoint.
However, we can compute norms efficiently using Tambara functors. Recall that a $G$-Tambara functor is a $G$-commutative monoid in the category of $G$-Mackey functors (\cite{Ho}*{Theorem~2.7.4}, \cite{HM}*{Theorem~1.1}). 
Let $\Tamb^G$ be the category of $G$-Tambara functors. There are forgetful functors $U^G \colon \Tamb^G \to \Mack^G$ for each finite group $G$.

\begin{proposition}[{\cite{Ho}*{Theorem~2.3.3}\cite{BH2018}*{Theorem~6.15}}]
\label{TambMackNorms}
	Let $G$ be a finite group with subgroup $H$. The restriction functor from $G$-Tambara functors to $H$-Tambara functors has a left adjoint $n_H^G \colon \Tamb^H \to \Tamb^G$ such that the following diagram commutes: 
	\[
	\begin{tikzcd}
		\Tamb^H \ar[r, "n_H^G"] \ar[d, "U^H"] & \Tamb^G \ar[d, "U^G"] \\
		\Mack^H \ar[r, "N_H^G"] & \Mack^G
	\end{tikzcd}
	\]
\end{proposition}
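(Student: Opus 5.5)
The plan is to build $n_H^G$ by hand from the Mackey norm $N_H^G$ and then check that it is left adjoint to restriction. I would work throughout in the model where a $G$-Tambara functor is a $G$-commutative monoid in $\Mack^G$, that is, an algebra over the norms. This is available by \cite{Ho}*{Theorem~2.7.4} and \cite{HM}*{Theorem~1.1}, recalled above.

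For an $H$-Tambara functor $\underline{R}$, the norm maps $N_K^{L}\downarrow^H_K \underline{R} \to \downarrow^H_L \underline{R}$ for $K \le L \le H$ make it a $G$-commutative monoid in $\Mack^H$. The first step is to show that $N_H^G$ is symmetric monoidal for the box product and is compatible with the indexed norms. Concretely, I would check that $N_H^G$ is the left Kan extension along coinduction $\Fin^H(G,-)$, and that coinduction preserves products and pullbacks and commutes with the coinductions defining the indexed norms. From these facts one gets natural isomorphisms
\[
N_L^G N_K^L \cong N_K^G,
\]
and a double coset formula
\[
\downarrow^G_L N_H^G \cong \mathop{\square}_{LgH} N^L_{L\cap gHg^{-1}} c_g \downarrow^H_{H \cap g^{-1}Lg}.
\]
The structure maps of $\underline{R}$ then transport along these isomorphisms, giving $N_H^G \underline{R}$ norm maps $N_L^{L'} \downarrow^G_L N_H^G\underline{R} \to \downarrow^G_{L'} N_H^G \underline{R}$. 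Define $n_H^G \underline{R}$ to be $N_H^G U^H \underline{R}$ with this structure. Commutativity of the square in \cref{TambMackNorms} then holds by construction.

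The second step is the adjunction. The unit $\underline{R} \to \downarrow^G_H N_H^G \underline{R}$ is the inclusion of the double coset summand for $g = e$. The counit $N_H^G \downarrow^G_H \underline{S} \to \underline{S}$ is the norm structure map of the $G$-Tambara functor $\underline{S}$. The triangle identities reduce to the unit and associativity axioms for $G$-commutative monoids. Equivalently, one can use the universal property of the left Kan extension: a map of Mackey functors $N_H^G \underline{R} \to \underline{S}$ is determined by a natural family $\underline{R}(X) \to \underline{S}(\Fin^H(G,X))$. Under this correspondence, Tambara maps match maps that are multiplicative, i.e.\ that commute with norms, which are exactly the $H$-Tambara maps $\underline{R} \to \downarrow^G_H \underline{S}$.

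The main obstacle is the first step: verifying that the transported norm maps satisfy all the Tambara coherence axioms, especially the distributivity of norms over transfers. Because $N_H^G$ is not additive, one cannot simply work summand by summand. I would handle this by passing to polynomial functors. Tambara functors are product-preserving functors out of the category of bispans $\mathcal{P}^G$. Coinduction extends to a product-preserving functor $\mathcal{P}^H \to \mathcal{P}^G$, because exponential diagrams are preserved by the right adjoint $\Fin^H(G,-)$. Left Kan extension along this functor then directly gives a left adjoint $\Tamb^H \to \Tamb^G$ to restriction. The remaining check is that this Kan extension agrees on underlying Mackey functors with the Kan extension along $\cA^H \to \cA^G$. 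That reduces to a cofinality or density statement: every bispan into $\Fin^H(G,X)$ factors, essentially uniquely, through a span-with-norm coming from $H$. This is the content of \cite{BH2018}*{Theorem~6.15}, and it is the step I expect to require the most care.
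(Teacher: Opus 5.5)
The step that fails is the one you rely on to get past your ``main obstacle'': coinduction does not extend to a product-preserving functor $\mathcal{P}^H\to\mathcal{P}^G$. Even if it did, a Kan extension along it would not be left adjoint to restriction.

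First, products in $\mathcal{P}^G$ are disjoint unions, and $\Fin^H(G,X\amalg Y)\neq\Fin^H(G,X)\amalg\Fin^H(G,Y)$. This is exactly the defect recorded in \cref{Mackey norm is not a left adjoint}.

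Second, being a right adjoint gives preservation of pullbacks, but not of the dependent products in exponential diagrams. So applying coinduction legwise does not even respect composition of bispans. Take $H=e$, $G=C_2$, $p\colon\{1,2\}\to\ast$, and $q\colon C\to\{1,2\}$ with fibres of sizes $2$ and $1$. Then $\nm_p\circ\tr_q$ is the bispan $C\leftarrow\{1,2\}\times\Pi_pC\to\Pi_pC\to\ast$ with $|\Pi_pC|=2$, so its legwise coinduction passes through a $4$-element set. By contrast, $\nm_{p\times p}\circ\tr_{q\times q}$ passes through $\Pi_{p\times p}(C\times C)$, which has $4\cdot2\cdot2\cdot1=16$ elements.

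Third, and most importantly, restriction $\Tamb^G\to\Tamb^H$ is precomposition with \emph{induction} $G\times_H-$, not coinduction. A left adjoint to precomposition with a functor $F$ sends the free Tambara functor on a generator at $X$ to the free one on a generator at $F(X)$. So the left adjoint of restriction must send $\uA[x_{H/H}]$ to $\uA[x_{G/H}]$, which is what the paper uses for $n_{C_2}^{Q_8}\uA[x_{C_2/C_2}]$. Your construction would instead give $\uA[x_{G/G}]$, since $\Fin^H(G,\ast)=\ast$.

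Your first step does not close the gap on its own. You defer the Tambara axioms for the transported structure. You also claim that multiplicative families $\underline{R}(X)\to\underline{S}(\Fin^H(G,X))$ are ``exactly'' the $H$-Tambara maps $\underline{R}\to\downarrow^G_H\underline{S}$. That claim is essentially the theorem, not a reformulation of it: one direction composes with the external norm of $\underline{S}$, and the other needs the unit $\underline{R}\to\downarrow^G_H N_H^G\underline{R}$ to be a Tambara map, which is the structure you deferred.

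The paper itself gives no proof, only the citations to \cite{Ho} and \cite{BH2018}. A correct route is as follows. Induction, viewed as the forgetful functor $\Fin^G_{/(G/H)}\to\Fin^G$, preserves pullbacks, dependent products and disjoint unions. Hence it gives a product-preserving functor $\mathcal{P}^H\to\mathcal{P}^G$ whose precomposition is restriction, and $n_H^G$ is the left Kan extension along it. The substantive part is then identifying $U^G n_H^G$ with the Kan extension along coinduction on spans. The bridge between induction and coinduction is the external norm $\nm_{\mathrm{pr}}\circ\res_\epsilon$ along $G\times_H X\xleftarrow{\epsilon}G/H\times\Fin^H(G,X)\xrightarrow{\mathrm{pr}}\Fin^H(G,X)$. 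This exhibits $\Fin^H(G,X)$ as the dependent product of $G\times_H X\to G/H$ along $G/H\to\ast$. Your closing density claim points in this direction, but it is formulated for a Kan extension along a functor that does not exist.
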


The functor $n_H^G \colon \Tamb^H \to \Tamb^G$ is called the \emph{norm} of Tambara functors, and 
\zcref{TambMackNorms} states that
 it agrees with the norm $N_H^G$ of underlying Mackey functors. Thus, we may use the Tambara norm $n_H^G$ to compute the Mackey norm $N_H^G$. The strategy to compute $N_H^G\underline{M}$ is as follows:  
\begin{enumerate}
	\item write $\underline{M}$ as $\underline{M} = U^H \underline{T}$, where $\underline{T}$ is an $H$-Tambara functor. (This is not always possible.)
	\item Write $\underline{T}$ as a pushout in the category of $H$-Tambara functors, where the three other corners of the pushout diagram are $H$-Tambara functors whose norms are easy to compute. Typical choices for the other three corners are Burnside Tambara functors $\uA$ or free Tambara functors $\uA[x_{H/K}]$. 
	\item Apply the Tambara norm $n_H^G$ to the pushout diagram. Since $n_H^G$ is a left adjoint, it preserves pushouts. Compute $n_H^G \underline{T}$ as a pushout in the category of $G$-Tambara functors.
	\item The underlying $G$-Mackey functor of $n_H^G\underline{T}$ is  $N_H^G \underline{M}$ by \zcref{TambMackNorms}. 
\end{enumerate}

\begin{example}
	The following diagram expresses $\Z/2$ as a pushout in the category of $e$-Tambara functors (equivalently, the category of commutative rings): 
	\[
		\begin{tikzcd}
			\Z[x] \ar[dr, "\ulcorner" description, very near end, phantom]\ar[r, "x \mapsto 2"] \ar[d, "x \mapsto 0"'] & \Z \ar[d] \\
			\Z \ar[r] & \Z/2.
		\end{tikzcd}
	\]
	Applying the Tambara norm $n_e^{C_2} \colon \CRing \simeq \Tamb^e \to \Tamb^{C_2}$ yields
	\[
		\begin{tikzcd}
			\uA[x_{C_2/e}] \ar[dr, "\ulcorner" description, very near end, phantom]\ar[r, "x \mapsto 2"] \ar[d, "x \mapsto 0"'] & \uA \ar[d] \\
			\uA \ar[r] & n_e^{C_2}(\Z/2),
		\end{tikzcd}
	\]
	where $\uA[x_{C_2/e}]$ is the free $C_2$-Tambara functor on an underlying generator \cite{BHRightAdj}*{Lemma 3.7}.
	This Tambara functor is created by starting with the Burnside Tambara functor $\uA$, and adding a variable $x$ at level $C_2/e$ and all of its images under the Weyl action and the structure maps of the Tambara functor. Hence, the image of $x\mapsto 2$ inside the Burnside Tambara functor $\uA$ is $2 \in \uA(C_2/e)$ and all of its images under the structure maps of the Tambara functor. This is the \emph{Tambara ideal of $\uA$ generated by $2 \in \uA(C_2/e)$}. Since we are setting $x$ to zero in the other leg of the pushout diagram, we can understand this norm $n_e^{C_2}(\Z/2)$ as a quotient 
	\[
		n_e^{C_2}(\Z/2) \cong \uA/\underline{\mathfrak{a}},
	\]
	where $\underline{\mathfrak{a}}$ is this Tambara ideal described above. Concretely, we have: 
	\[
		\begin{tikzcd}[row sep=normal]
			(2 + t, 2t) 
				\ar[dd] 
				\ar[r, hook]
				& 
			\Z[t]/(t^2 - 2t) 
				\ar[dd]
				\ar[r, two heads]
				& 
			\Z/4
				\ar[dd]
				\\ \\
			(2) 
				\ar[uu, bend left=30, \normcolor]
				\ar[uu, bend right=30, \inductioncolor]
				\ar[r, hook]
				& 
			\Z 	
				\ar[uu, bend left=30, \normcolor]
				\ar[uu, bend right=30, \inductioncolor]
				\ar[r, two heads]
				& 
			\Z/2 
				\ar[uu, bend left=30, \normcolor]
				\ar[uu, bend right=30, \inductioncolor]
				\\
			\underline{\mathfrak{a}} 
				\ar[r, hook]
				& 
			\uA 
				\ar[r, two heads]
				& 
			\uA/\underline{\mathfrak{a}}
		\end{tikzcd}
	\]	
\end{example}

To apply this strategy in the case of $N_{C_2}^{Q_8} \underline{\Z}$, note that we may write
the $C_2$-Tambara functor
 $\underline{\Z}$ as the quotient of $\uA$ by the Tambara ideal generated by $t -2 \in \uA(C_2/C_2)$, where $t = [\sfrac{C_2}{e}]$ is the class of a free orbit in the Burnside ring $A(C_2) = \uA(C_2/C_2)$.  
In the category of $C_2$-Tambara functors, this is a pushout: 
\[
	\begin{tikzcd}
			\uA[x_{C_2/C_2}] \ar[dr, "\ulcorner" description, very near end, phantom]\ar[r, "x \mapsto t-2"] \ar[d, "x \mapsto 0"'] & \uA \ar[d] \\
			\uA \ar[r] & \ulZ,
		\end{tikzcd}
\]
Applying the norm $n_{C_2}^{Q_8}$ yields another pushout diagram 
\[
	\begin{tikzcd}
			\uA[x_{Q_8/C_2}] \ar[dr, "\ulcorner" description, very near end, phantom]\ar[r, "x \mapsto t-2"] \ar[d, "x \mapsto 0"'] & \uA \ar[d] \\
			\uA \ar[r] & n_{C_2}^{Q_8}\ulZ,
		\end{tikzcd}
\]
where we have used \cite{HMQ}*{Proposition 4.0.2} to compute $n_{C_2}^{Q_8} \uA[x_{C_2/C_2}]$. This diagram expresses $n_{C_2}^{Q_8} \ulZ$ as the quotient of the $Q_8$-Burnside Tambara functor $\uA$ by the Tambara ideal
generated by $t-2 \in \uA(Q_8/C_2)$. Thus, to compute $N_{C_2}^{Q_8} \ulZ$, we must first compute this ideal.

\begin{definition}
	Let $\ufa$ be the Tambara ideal of the $Q_8$-Burnside Tambara functor $\uA$ generated by $t-2 = [\sfrac{C_2}{e}] - 2 \in \uA(Q_8/C_2)$. 
\end{definition}

Note that $\ufa$ is a principal ideal. 

\begin{remark}
\label{contained in augmentation ideal}
Because $t-2$ is sent to zero by the augmentation homomorphism $\uA \to \ulZ$, we know that the principal ideal $\ufa$ generated by $t-2$ is contained in the augmentation ideal $\augmentationideal = \ker(\uA \to \ulZ)$. We will show that these ideals in fact agree except at the fixed level $\uA(Q_8/Q_8)$. 
\end{remark}

We need one preliminary computation.

\begin{lemma}
\label{lemma:norm c2 c4 t-2}
	In the $C_4$-Burnside Tambara functor, the norm of $t-2$ is given by
	\[
		\nm_{C_2}^{C_4}(t - 2) = -[\sfrac{C_4}{e}] + 3 [\sfrac{C_4}{C_2}] - 2. 
	\]
\end{lemma}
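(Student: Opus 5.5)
The plan is to compute the norm through its formula in the Burnside Tambara functor: for $H \le G$, $\nm_H^G$ of a finite $H$-set $X$ is the coinduced $G$-set $\mathrm{Map}_H(G,X)$. The complication is that $t-2$ is a virtual element, so we first reduce to norms of genuine $H$-sets. Since norms are multiplicative but not additive, I will write $t - 2 = t + (-2)$. Norms of sums are governed by the Tambara formula for an index-two subgroup: $\nm(a+b) = \nm(a) + \nm(b) + \tr_{C_2}^{C_4}(a\cdot c(b))$, where $c$ denotes the Weyl conjugate. For the negative part, $\nm(-1)$ is determined by $\nm(1)=1$ and $\nm(-1)\nm(-1)=1$. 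Alternatively, and more robustly, one uses the known formula $\nm_{C_2}^{C_4}(-1) = [C_4/C_2] - 1$, which follows by computing the norm of $-1 = [C_2/C_2] - 2$, or from polynomiality.

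The key steps, in order, are as follows.

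\begin{enumerate}
\item Compute $\nm_{C_2}^{C_4}(t)$, where $t=[C_2/e]$. Coinduction sends $C_2/e$ to $\mathrm{Map}_{C_2}(C_4, C_2/e)$, which has $4$ elements. Since $C_2$ acts freely on $C_2/e$, the action is free, so this set is $[C_4/e]$.
\item Compute $\nm(2) = \nm(1+1)$ using the sum formula: $\nm(2) = 1 + 1 + \tr_{C_2}^{C_4}(1) = 2 + [C_4/C_2]$. Equivalently, $\mathrm{Map}_{C_2}(C_4, \{1,2\})$ has $4$ points, namely two fixed points and one free $C_4/C_2$-orbit.
\item Write $t - 2 = t \cdot 1 + 2\cdot(-1)$, or more directly use $\nm(a-b)$ expressed via $\nm(-1)$. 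Concretely, write $t-2 = (-1)(2 - t)$, so that $\nm(t-2) = \nm(-1)\,\nm(2-t)$. Better still, apply the sum formula to $a = t$ and $b = -2$:
\[
\nm(t-2) = \nm(t) + \nm(-2) + \tr_{C_2}^{C_4}(t \cdot c(-2)).
\]
Here $c(-2) = -2$, and $\tr_{C_2}^{C_4}(-2t) = -2[C_4/e]$.
\item Compute $\nm(-2) = \nm(-1)\nm(2) = ([C_4/C_2]-1)(2+[C_4/C_2])$. Using $[C_4/C_2]^2 = 2[C_4/C_2]$, this equals $2[C_4/C_2] + 2[C_4/C_2] - 2 - [C_4/C_2] = 3[C_4/C_2] - 2$.
\item Sum the terms: $[C_4/e] + 3[C_4/C_2] - 2 - 2[C_4/e] = -[C_4/e] + 3[C_4/C_2] - 2$, matching the statement.
\end{enumerate}

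The main obstacle is justifying $\nm_{C_2}^{C_4}(-1) = [C_4/C_2] - 1$, together with the sign bookkeeping. I would verify this via the ghost map. The ghost coordinates of $\nm_H^G(x)$ at $K \le G$ are products of ghost coordinates of $x$ over the $K$-orbits on $G/H$. For $-1$, whose ghost coordinates are all $-1$, this gives $(-1)^2 = 1$ at $e$ and $(-1)^1 = -1$ at $C_2$ and at $C_4$. This matches $[C_4/C_2]-1$, whose marks are $(2-1,\,2-1,\,0-1) = (1,1,-1)$.

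In fact, the whole lemma can be cross-checked the same way. The ghost coordinates of $t-2$ in $A(C_2)$ are $(0,-2)$ at $(e,C_2)$. Its norm should then have marks $0\cdot 0 = 0$ at $e$, $0$ at $C_2$, and $-2$ at $C_4$. The claimed answer has marks $(-4+6-2,\ 0+6-2,\ -2) = (0,4,-2)$. The value $4$ at $C_2$ disagrees with $0$, so I must recheck. At $K = C_2$, which is normal and acts trivially on $C_4/C_2$, there are two orbits of size $1$. The coordinate is therefore the product of the $C_2$-marks of $x$ and of its conjugate, namely $(-2)(-2) = 4$. This agrees with the claimed answer, and the ghost-map computation gives a clean, independent proof. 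I would therefore present the ghost-map computation as the primary argument and use injectivity of the mark homomorphism to conclude.
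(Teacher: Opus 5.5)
\paragraph{Comparison with the paper.} Your steps 1--5 are exactly the paper's proof:
\begin{itemize}
\item the Hill--Mazur sum formula, with trivial Weyl conjugation since $C_4$ is abelian;
\item $\nm_{C_2}^{C_4}(t)=[C_4/e]$ by coinduction;
\item $\nm_{C_2}^{C_4}(2)=2+[C_4/C_2]$;
\item $\nm(-2)=\nm(2)\nm(-1)$ and $\tr_{C_2}^{C_4}(-2t)=-2[C_4/e]$.
\end{itemize}
You call $\nm(-1)$ the main obstacle. The paper settles it in one line: $0=\nm(1+(-1))=1+\nm(-1)+\tr_{C_2}^{C_4}(-1)$ gives $\nm(-1)=[C_4/C_2]-1$ at once.

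\paragraph{Problems with your other justifications of $\nm(-1)$.} Neither holds up as written.
\begin{itemize}
\item The conditions $\nm(1)=1$ and $\nm(-1)^2=1$ do not determine $\nm(-1)$. The ring $A(C_4)$ has four units, $\pm1$ and $\pm([C_4/C_2]-1)$, and all of them square to $1$.
\item Your ghost check of $\nm(-1)$ makes the mistake you later catch for $t-2$. $C_2$ acts trivially on $C_4/C_2$, so there are two orbits and the $C_2$-coordinate is $(-1)^2=1$, not $-1$. As written you get $(1,-1,-1)$. That does not match $(1,1,-1)$, and it is not even a vector of marks, since it violates $\phi_e\equiv\phi_{C_2}\pmod 4$.
\end{itemize}

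\paragraph{Your mark-homomorphism argument.} The argument you finally propose as primary is correct and is a genuinely different route. The element $t-2$ has marks $(0,-2)$, so $\nm_{C_2}^{C_4}(t-2)$ has marks $(0^2,(-2)^2,-2)=(0,4,-2)$. These are the marks of $-[C_4/e]+3[C_4/C_2]-2$, and injectivity of the mark homomorphism finishes the proof.

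This route avoids computing $\nm(-1)$ and $\nm(2)$ altogether and is purely mechanical. You do need to state one input: the product formula $\phi_K(\nm_H^G x)=\prod_{HgK}\phi_{H\cap gKg^{-1}}(x)$ holds for \emph{virtual} $x$. This is true because both sides are polynomial in $x$ and agree on genuine $H$-sets; equivalently, the mark map is a morphism into the ghost Tambara functor.

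\paragraph{What each route buys.} The paper's route stays inside the Burnside Tambara functor. It reuses the sum formula for $\nm_I^{Q_8}$ in the next lemma, where it must appeal to an extension of Hill--Mazur beyond abelian groups to Dedekind groups. Your mark method verifies that lemma just as easily without that extension: the marks of $\nm_{C_2}^{Q_8}(t-2)$ at $e,C_2,I,J,K,Q_8$ are $0,16,4,4,4,-2$, matching the stated formula.
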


\begin{proof}
	We use the formula for the norm of a sum from \cite{HM}*{Corollary~2.6} specialized to the case of trivial Weyl actions: 
	\[
		\nm_{C_2}^{C_4}(a + b) = \nm_{C_2}^{C_4}(a) + \nm_{C_2}^{C_4}(b) + \tr_{C_2}^{C_4}(ab). 
	\]
	Using this formula, we can compute the norm of $2$: 
	\begin{align*}
		\nm_{C_2}^{C_4}(2) &= \nm_{C_2}^{C_4}(1) + \nm_{C_2}^{C_4}(1) + \tr_{C_2}^{C_4}(1) \\
		&= 1 + 1 + \tr_{C_2}^{C_4}(1) \\
		&= 2 + [\sfrac{C_4}{C_2}].
	\end{align*}
	Similarly, we use this formula to compute the norm of $-1$: 
	\begin{align*}
		\nm_{C_2}^{C_4}(0) &= \nm_{C_2}^{C_4}(1 + -1)\\
		& = \nm_{C_2}^{C_4}(1) + \nm_{C_2}^{C_4}(-1) + \tr_{C_2}^{C_4}(-1) \\
		& = 1 + \nm_{C_2}^{C_4}(-1) - [\sfrac{C_4}{C_2}].
	\end{align*}	
	But $\nm_{C_2}^{C_4}(0) = 0$, so 
	\[
		\nm_{C_2}^{C_4}(-1) = [\sfrac{C_4}{C_2}] - 1. 
	\]
	
	To compute $\nm_{C_2}^{C_4}(t) = \nm_{C_2}^{C_4}([\sfrac{C_2}{e}])$, however, we resort to the definition. This norm is the class of the coinduction $[\Fin^{C_2}(C_4,\sfrac{C_2}{e})]$, which is a single free $C_4$-orbit:
	\[
		\nm_{C_2}^{C_4}(t) = [\sfrac{C_4}{e}]. 
	\]

	Finally, we can use the formula for the norm of a sum to calculate 
	\begin{align*}
		\nm_{C_2}^{C_4}(t-2) &= \nm_{C_2}^{C_4}(t) + \nm_{C_2}^{C_4}(-2) + \tr_{C_2}^{C_4}(-2t) \\
		&= [\sfrac{C_4}{e}] + \nm_{C_2}^{C_4}(2)\nm_{C_2}^{C_4}(-1) -2 [\sfrac{C_4}{e}] \\
		&= -[\sfrac{C_4}{e}] + \left( 2 + [\sfrac{C_4}{C_2}]\right)\left([\sfrac{C_4}{C_2}] - 1\right)\\
		&= -[\sfrac{C_4}{C_2}] + 3 [\sfrac{C_4}{C_2}] - 2 \qedhere
	\end{align*}	
\end{proof}

\begin{proposition}
\label{QuotientC4Value}
	Let $C_4$ be any order 4 subgroup of $Q_8$. 
	Then $(\uA/\ufa)(Q_8/C_4)$ is isomorphic to $\Z$ as a commutative ring.
\end{proposition}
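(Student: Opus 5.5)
Since $\ufa \subseteq \augmentationideal$ by \zcref{contained in augmentation ideal}, the augmentation $\uA(Q_8/C_4) = A(C_4) \to \Z$ (sending a finite $C_4$-set to its cardinality) factors through $(\uA/\ufa)(Q_8/C_4)$. This gives a surjective ring map $(\uA/\ufa)(Q_8/C_4) \onto \Z$. It therefore suffices to show that $\ufa(Q_8/C_4)$ contains the whole augmentation ideal of $A(C_4)$. That ideal is the free abelian group on $[\sfrac{C_4}{C_2}] - 2$ and $[\sfrac{C_4}{e}] - 4$. So the plan is to exhibit these two elements (or any two elements generating the same subgroup) inside $\ufa(Q_8/C_4)$.

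First I restrict to the $C_4$-Tambara functor. The value $\ufa(Q_8/C_4)$ is the value at $C_4/C_4$ of the Tambara ideal of the $C_4$-Burnside functor containing $t-2 \in A(C_2)$, and restriction to $C_4$ gives the $C_4$-Burnside Tambara functor with $C_2 \le C_4$ (recall $C_2$ is central, so the Weyl actions are trivial). A Tambara ideal is closed under transfers and norms and is an ideal levelwise, so I can produce the following elements.
\begin{itemize}
\item $\tr_{C_2}^{C_4}(t-2) = [\sfrac{C_4}{e}] - 2[\sfrac{C_4}{C_2}]$.
\item $\nm_{C_2}^{C_4}(t-2) = -[\sfrac{C_4}{e}] + 3[\sfrac{C_4}{C_2}] - 2$, by \zcref{lemma:norm c2 c4 t-2}. (The last line there has a typo; the lemma statement is the correct value.)
\end{itemize}
Adding these two gives $[\sfrac{C_4}{C_2}] - 2 \in \ufa(Q_8/C_4)$.

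Next, multiplying $[\sfrac{C_4}{C_2}] - 2$ by $[\sfrac{C_4}{C_2}]$ and using $[\sfrac{C_4}{C_2}]^2 = 2[\sfrac{C_4}{C_2}]$ gives $0$, so that yields nothing new. Instead I take $\tr_{C_2}^{C_4}(t-2) + 2([\sfrac{C_4}{C_2}] - 2) = [\sfrac{C_4}{e}] - 4$. Both basis elements of the augmentation ideal now lie in $\ufa(Q_8/C_4)$. Hence $\ufa(Q_8/C_4)$ equals the augmentation ideal, and the quotient is $\Z$ as a ring, since the augmentation is a ring map.

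The main obstacle is the first step: justifying that the norm from $Q_8/C_2$ to $Q_8/C_4$ in the $Q_8$-Tambara functor agrees with the $C_4$-Burnside norm computed in \zcref{lemma:norm c2 c4 t-2}. I would argue this directly from the fact that the Burnside Tambara functor's structure maps are computed by coinduction and induction of finite sets, and these only depend on the pair $C_2 \le C_4$. Once that is settled, the remaining arithmetic is short.
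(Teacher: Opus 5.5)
Your proposal is correct and follows the paper's proof: you use the same two elements, $\tr_{C_2}^{C_4}(t-2)$ and $\nm_{C_2}^{C_4}(t-2)$, and the same combinations $\tr+\nm$ and $3\tr+2\nm$ to obtain the additive generators $[\sfrac{C_4}{C_2}]-2$ and $[\sfrac{C_4}{e}]-4$ of the augmentation ideal. You are also right that the last line of the proof of \zcref{lemma:norm c2 c4 t-2} has a typo. One small point: your argument only uses that these elements lie in $\ufa(Q_8/C_4)$. It does not need, and you have not justified, that $\ufa(Q_8/C_4)$ equals the value of the $C_4$-ideal generated by $t-2$. That equality only follows after the fact, once both are shown to be the full augmentation ideal.
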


\begin{proof}
	By \zcref{contained in augmentation ideal}, we know that $\ufa(Q_8/C_4)$ is contained in $\augmentationideal(Q_8/C_4)$. We will show the opposite inclusion, whence $(\uA/\ufa)(Q_8/C_4)$ is isomorphic to $(\uA/\augmentationideal)(Q_8/C_4) \cong \Z$. 
	
	The augmentation ideal $\augmentationideal$ is the ideal of virtual $G$-sets of virtual cardinality zero. At level $Q_8/C_4$, the generators of $\augmentationideal(Q_8/C_4)$ are $[\sfrac{C_4}{C_2}] - 2$ and $[\sfrac{C_4}{e}] - 4$. We will show that these are in the ideal $\ufa(Q_8/C_4)$. 
	
	The ideal $\ufa(Q_8/C_4)$ contains the elements  
	\begin{align*}
		\tr_{C_2}^{C_4}(t-2) &= \hphantom{-}[\sfrac{C_4}{e}] - 2[\sfrac{C_4}{C_2}]\\
	  \text{ and } \qquad	\nm_{C_2}^{C_4}(t-2) &= -[\sfrac{C_4}{e}] + 3 [\sfrac{C_4}{C_2}] - 2.
	\end{align*}
	The generators of $\augmentationideal(Q_8/C_4)$ can be written as linear combinations of the elements above:
	\begin{align*}
		[\sfrac{C_4}{C_2}] - 2 &= \hphantom{3}\tr_{C_2}^{C_4}(t-2) + \hphantom{2}\nm_{C_2}^{C_4}(t-2)  \\
		[\sfrac{C_4}{e}]\ \  - 4 &= 3 \tr_{C_2}^{C_4}(t-2) + 2 \nm_{C_2}^{C_4}(t-2).
	\end{align*}
	Hence, the generators of $\augmentationideal(Q_8/C_4)$ are in $\ufa(Q_8/C_4)$. 
\end{proof}

\begin{remark}
\label{rem:Nakaoka}
	The proof of \cref{QuotientC4Value} shows that $\tr_{C_2}^{C_4}(t-2)$ and $\nm_{C_2}^{C_4}(t-2)$ additively generate the ideal $\ufa(Q_8/C_4)$.
	This also follows from \cite{ideals}*{Proposition 3.4}. 
\end{remark}

We now turn to the $Q_8$-level.

\begin{lemma}
\label{lemma:generators at Q/Q}
	The ideal $\ufa(Q_8/Q_8)$ of $\uA(Q_8/Q_8)$ is generated by $\nm_{C_2}^{Q_8}(t-2)$, $\tr_{C_2}^{Q_8}(t-2)$, and $\tr_H^{Q_8} \nm_{C_2}^H(t-2)$ for $H \in \{I,J,K\}$. 
\end{lemma}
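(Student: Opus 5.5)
We will prove that $\ufa(Q_8/Q_8)$ is generated as an ideal by the listed elements. The plan is to use the general description of a principal Tambara ideal due to Nakaoka (\cite{ideals}, the same result invoked in \cref{rem:Nakaoka}). If $\underline{T}$ is a $G$-Tambara functor and $x \in \underline{T}(G/L)$, then the Tambara ideal generated by $x$, evaluated at $G/G$, is the ideal of $\underline{T}(G/G)$ generated by all elements $\tr_V^G \nm_U^V \res^L_U(c_g x)$. Here $U \le V \le G$, and $U$ ranges over subgroups of conjugates of $L$. More precisely, by the exponential formula every element of a Tambara ideal is built from $x$ by restrictions, conjugations, norms, transfers, sums, and products with arbitrary elements. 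Norms of sums and products can be rewritten, via the formulas of \cite{HM}, as sums of transfers of products of norms. The resulting element then lies in the ideal generated by composites of the form $\tr\circ\nm\circ\res$ applied to the generator. I will take this reduction as the starting point, citing \cite{ideals}*{Proposition~3.4} for the case at hand if possible.

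Next I specialize to $G = Q_8$, $L = C_2$ and $x = t-2$. Since $C_2$ is normal and central, conjugation acts trivially on $\uA(Q_8/C_2)$. The restriction $\res^{C_2}_e(t-2) = 2-2 = 0$, so every term with $U = e$ vanishes. This leaves $U = C_2$, with $V$ ranging over the subgroups containing $C_2$, namely $C_2$, $I$, $J$, $K$ and $Q_8$. These choices produce exactly $\tr_{C_2}^{Q_8}(t-2)$, the three elements $\tr_H^{Q_8}\nm_{C_2}^H(t-2)$, and $\nm_{C_2}^{Q_8}(t-2)$. This matches the list in the statement.

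The main obstacle is justifying the general reduction. We must show that the set $S$ of ideals generated levelwise by these $\tr\,\nm\,\res$ composites is already closed under all Tambara operations. Closure under transfer and restriction follows from the Mackey double coset formula, using the fact that restriction is a ring map. Closure under norm is the delicate step. One must show that $\nm_V^W(\sum_i a_i y_i)$ lies in the ideal at level $W$ whenever each $y_i$ is a generator. I would argue this using the Tambara exponential formula. The norm of a sum expands as a sum of transfers of products of norms of the summands, and every term involves at least one factor $\nm(a_i y_i) = \nm(a_i)\nm(y_i)$. Composites of norms combine into a single norm, and norms commute past transfers via the distributive law, yielding transfers of norms again. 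Applying this to $Q_8$ with the subgroup lattice above completes the proof. If the general citation suffices, I would simply invoke \cite{ideals} and carry out the specialization from the previous paragraph.
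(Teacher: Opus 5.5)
Your proposal has a genuine gap in the general reduction. Nakaoka's Proposition~3.4 does \emph{not} say that the Tambara ideal at $G/G$ is the ring ideal of $\underline{T}(G/G)$ generated by the bare composites $\tr_V^G\nm_U^V\res^L_U(c_gx)$. It says that elements have the form $\tr(y\cdot\nm(\res x))$ along possibly non-transitive maps, with a multiplier $y$ living at the \emph{intermediate} level $G/V$. Frobenius reciprocity lets you pull $y$ out only when $y$ is a restriction from the top level. This is exactly what is hidden in your claim that closure under transfer ``follows from the double coset formula'': $\tr_V^W(z\cdot g)$ with $z\in\underline{T}(G/V)$ not in the image of $\res^W_V$ need not lie in the ideal generated by $\tr_V^W(g)$. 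The same problem reappears in your norm step, because the products of norms produced by the distributive law are intermediate-level multipliers in disguise.

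In fact the general statement you propose is false. Take the fixed-point $C_2$-Tambara functor of $\Z[i]$ with complex conjugation, and $x=3i$ at level $C_2/e$. Your composites are $\tr(x)=0$ and $\nm(x)=9$, which generate $9\Z$. But the Tambara ideal also contains $\tr(i\cdot x)=-6$, so at $C_2/C_2$ it is $3\Z$.

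In the case at hand the restrictions are far from surjective. One has $\res^{Q_8}_{C_2}(A(Q_8))=\Z\{1,4t\}$ and $\res^{Q_8}_I(A(Q_8))=\Z\{1,[\sfrac{I}{C_2}],2[\sfrac{I}{e}]\}$. So terms such as $\tr_{C_2}^{Q_8}(t\cdot(t-2))$ and $\tr_I^{Q_8}([\sfrac{I}{e}]\cdot\nm_{C_2}^I(t-2))$ are not controlled by your argument. The missing content of the lemma is the check that these multipliers contribute nothing new. The paper does this by explicit Burnside-ring computations:
\begin{itemize}
\item In $A(C_2)$, $(a+bt)(t-2)=a(t-2)$.
\item For $y=a[\sfrac{C_4}{e}]+b[\sfrac{C_4}{C_2}]+c$, one has $y\cdot\nm_{C_2}^{C_4}(t-2)=-2b\,\tr_{C_2}^{C_4}(t-2)+c\,\nm_{C_2}^{C_4}(t-2)$. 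Note the cross-term landing on $\tr_{C_2}^{Q_8}(t-2)$ after transferring.
\item Non-transitive terms are handled by noting that duplicated copies of $t-2$ recombine either as products, which are absorbed into $y$, or as sums at the transfer stage.
\end{itemize}
Your specialization paragraph, which reads off $C_2\le V\le Q_8$, gives the right list. It is justified only once these computations are supplied.
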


\begin{proof}
	Let $\mathfrak{b}$ be the ideal of the ring $\uA(Q_8/Q_8)$ generated by these elements. This ideal $\mathfrak{b}$ is contained in $\ufa(Q_8/Q_8)$; we must show the other inclusion.
	
	By \cite{ideals}*{Proposition 3.4}, any element of $\ufa(Q_8/Q_8)$ is of the form 
	\[
		\tr(y \cdot \nm(\res(t-2))), 
	\] 
	by which we mean any element of $\ufa(Q_8/Q_8)$ is a transfer of a norm of a restriction (in that order) of $t-2 \in \uA(Q_8/C_2)$, possibly multiplying the norm by an element $y$ of $\uA$ at the same level before transferring. Note also that the transfers, norms, and restrictions in the formula above are between possibly non-transitive $Q_8$-sets. Consequently, if the map along which we transfer contains a fold $\nabla \colon X \amalg X \to X,$ then this transfer involves a sum. Likewise, the norm may involve a product and the restriction may duplicate terms. 
	
	To understand elements of $\ufa(Q_8/Q_8)$, we consider all possible paths along which we can restrict then transfer then norm $t-2$ and land in $\uA(Q_8/Q_8)$. Since $\res^{C_2}_e(t-2) = 0$, 
the only relevant restrictions are duplications.
Without any duplication, this procedure produces a term of the form $\tr_H^{Q_8}(y \cdot \nm_{C_2}^H(t-2))$ for $C_2  \leq H \leq Q_8$ and $y\in A(H)$. 
When $t-2$ is duplicated in restriction, however, duplicated terms are recombined either at the norm stage by multiplication or at the transfer stage by addition. 
If duplicates are recombined by multiplication in the norm, extra factors of $\nm_{C_2}^H(t-2)$ may be absorbed into the $y$. 
If duplicates are recombined in the transfer, we add them together. Therefore,  a general element in $\ufa(Q_8/Q_8)$ may be written as a sum of terms of the form 
	\begin{equation}
	\label{general element}
		\tr_H^{Q_8}(y \cdot \nm_{C_2}^H (t-2))
	\end{equation}
	for some $H$ such that $C_2 \leq H \leq Q_8$. There are five possibilities for $H$: any of the nontrivial subgroups of $Q_8$. 
	\begin{itemize}
		\item If $H = C_2$, then the norm is an identity and \cref{general element} becomes $\tr_{C_2}^{Q_8}(y \cdot (t-2))$ for some $y \in \uA(Q_8/C_2)$. But for any $y = a + bt \in \uA(Q_8/C_2)$ with $a,b \in \Z$, 
		\[
			(a + bt)(t-2) = a(t-2) + b(t^2 - 2t) = a (t-2),
		\]
		so $\tr_{C_2}^{Q_8}(y(t-2)) = a \tr_{C_2}^{Q_8}(t-2)$ for some $a \in \Z$, which is contained in $\mathfrak{b}$.
				
		\item If $H = Q_8$, then the transfer is an identity and \cref{general element} becomes $y \cdot \nm_{C_2}^{Q_8}(t-2)$, which is in the ideal $\mathfrak{b}$. 
		
		\item If $H \cong C_4$ is an order 4 subgroup, then \cref{general element} is $\tr_{C_4}^{Q_8}(y \cdot \nm_{C_2}^{C_4}(t-2))$ for some $y \in \uA(Q_8/{C_4})$. Recall {from the proof of \cref{QuotientC4Value}} that 
			\[
				\nm_{C_2}^{C_4}(t-2) = -[\sfrac{C_4}{e}] + 3 [\sfrac{C_4}{C_2}] - 2,
			\] 
			and let 
			\[
				y = a[\sfrac{C_4}{e}] + b [\sfrac{C_4}{C_2}] + c \in A({C_4}) \cong \uA(Q_8/{C_4})
			\] 
			with $a, b, c \in \Z$. Then we can calculate:
		\begin{align*}
			\hspace*{1cm}\tr_{C_4}^{Q_8}(y \cdot \nm_{C_2}^{C_4}(t-2)) 
				&= \tr_{C_4}^{Q_8}\bigg((-2b-c)[\sfrac{C_4}{e}] + (4b + 3c)[\sfrac{C_4}{C_2}] - 2c \bigg)\\
				&= \tr_{C_4}^{Q_8}\bigg(-2b\bigg([\sfrac{C_4}{e}]-2[\sfrac{C_4}{C_2}]\bigg) + c \bigg(-[\sfrac{C_4}{e}] + 3[\sfrac{C_4}{C_2}] - 2 \bigg) \bigg)\\
				&= \tr_{C_4}^{Q_8}\bigg( 
					-2b \tr_{C_2}^{C_4}(t-2) + c \nm_{C_2}^{C_4}(t-2)
				\bigg)\\
				&= -2b \tr_{C_2}^{Q_8}(t-2) + c \tr_{C_4}^{Q_8} \nm_{C_2}^{C_4}(t-2). 
		\end{align*}
		In particular, this is a $\Z$-linear combination of elements in $\mathfrak{b}$. 
	\end{itemize}
	Therefore, any element of $\ufa(Q_8/Q_8)$ is a $\Z$-linear combination of elements of $\mathfrak{b}$, so $\ufa(Q_8/Q_8)$ is contained in $\mathfrak{b}$. 
\end{proof}

We record here
 that the generators of the ideal $\ufa(Q_8/Q_8) $ are
\[
\begin{split}
\nm_{C_2}^{Q_8}(t-2) &= -2 [\sfrac{Q_8}{e}] + 3 [\sfrac{Q_8}{I}] + 3 [\sfrac{Q_8}{J}] + 3 [\sfrac{Q_8}{K}] -2 ,
\\
\tr_{C_2}^{Q_8}(t-2) &= [\sfrac{Q_8}{e}] - 2 [\sfrac{Q_8}{C_2}],
\\
\tr_I^{Q_8} \nm_{C_2}^I(t-2) &= -[\sfrac{Q_8}{e}] + 3 [\sfrac{Q_8}{C_2}] -2 [\sfrac{Q_8}{I}],
\\
\tr_J^{Q_8} \nm_{C_2}^J(t-2) &= -[\sfrac{Q_8}{e}] + 3 [\sfrac{Q_8}{C_2}] -2 [\sfrac{Q_8}{J}],
\\
\text{and} \qquad \quad \tr_K^{Q_8} \nm_{C_2}^K(t-2) &= -[\sfrac{Q_8}{e}] + 3 [\sfrac{Q_8}{C_2}] -2 [\sfrac{Q_8}{K}].
\end{split}
\]
The last three of these are obtained by transferring the result of \cref{lemma:norm c2 c4 t-2} to $Q_8$, and the first comes from \cref{lemma:norm c2 q8 t-2} below. 

\begin{lemma}
\label{lemma:norm c2 q8 t-2}
	In the $Q_8$-Burnside Tambara functor, the norm of $t-2$ is given by
	\[
		\nm_{C_2}^{Q_8}(t - 2) = 
		-2 [\sfrac{Q_8}{e}] + 3 [\sfrac{Q_8}{I}] + 3 [\sfrac{Q_8}{J}] + 3 [\sfrac{Q_8}{K}] -2 . 
	\]
\end{lemma}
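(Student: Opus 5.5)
The plan is to compute $\nm_{C_2}^{Q_8}(t-2)$ in ghost coordinates and then read off the answer, rather than iterating the norm-of-a-sum formula as in \cref{lemma:norm c2 c4 t-2}. The index-4 version of that formula involves many more cross terms. The ghost approach works because the mark homomorphism $A(Q_8) \to \prod_{(L)} \Z$, $X \mapsto (|X^L|)_L$, is an injective ring map. It therefore suffices to check that both sides of the claimed identity have the same marks at each of the conjugacy classes of subgroups $e, C_2, I, J, K, Q_8$. Note that every subgroup of $Q_8$ is normal.

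\textbf{Step 1: marks of a norm.} I will use the standard description of marks of a norm. Since $\nm_{C_2}^{Q_8}[X] = [\Fin^{C_2}(Q_8, X)]$, for $L \le Q_8$ we have
\[
\varphi_L\bigl(\nm_{C_2}^{Q_8} x\bigr) = \prod_{L g C_2 \in L\backslash Q_8/C_2} \varphi_{C_2 \cap g^{-1}Lg}(x).
\]
This extends from genuine $C_2$-sets to virtual ones by polynomiality of the norm: both sides are polynomial in $x$, and the formula is multiplicative. I expect justifying this extension cleanly to be the main obstacle. If needed, I would instead write $t-2$ via the norm-of-a-sum formula and argue that the marks formula holds on genuine sets and is compatible with the Tambara relations.

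\textbf{Step 2: marks of $t-2$ and of its norm.} The marks of $t - 2 \in A(C_2)$ are $\varphi_e = 0$ and $\varphi_{C_2} = -2$. Because $C_2$ is central, $L$ acts on the $4$-element set $Q_8/C_2$ through $LC_2/C_2$, and every stabilizer meets $C_2$ in $L\cap C_2$. This gives the following marks for $\nm_{C_2}^{Q_8}(t-2)$.
\begin{itemize}
\item At $e$: there are $4$ orbits, each contributing a factor $0$, so the mark is $0$.
\item At $C_2$: there are $4$ orbits, so the mark is $(-2)^4 = 16$.
\item At each of $I,J,K$: there are $2$ orbits, so the mark is $(-2)^2 = 4$.
\item At $Q_8$: there is $1$ orbit, so the mark is $-2$.
\end{itemize}

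\textbf{Step 3: compare with the right-hand side.} The marks of the basis elements are as follows.
\begin{itemize}
\item $[\sfrac{Q_8}{e}]$ has marks $(8,0,0,0,0,0)$ at $(e, C_2, I, J, K, Q_8)$.
\item $[\sfrac{Q_8}{I}]$ has mark $2$ at $e$, $C_2$ and $I$, and mark $0$ elsewhere; $J$ and $K$ are symmetric.
\item $1$ has mark $1$ everywhere.
\end{itemize}
The right-hand side therefore has the following marks.
\begin{itemize}
\item At $e$: $-16 + 18 - 2 = 0$.
\item At $C_2$: $18 - 2 = 16$.
\item At $I$ (and likewise $J$, $K$): $6 - 2 = 4$.
\item At $Q_8$: $-2$.
\end{itemize}
These agree with Step 2, so injectivity of the mark map gives the lemma. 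As a sanity check, the same method recovers \cref{lemma:norm c2 c4 t-2}: the marks there are $(0,4,-2)$ at $(e, C_2, C_4)$, which match $-[\sfrac{C_4}{e}] + 3[\sfrac{C_4}{C_2}] - 2$.
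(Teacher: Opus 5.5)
Your proof is correct, and it takes a genuinely different route from the paper. The paper never uses an index-$4$ norm-of-a-sum formula. It factors $\nm_{C_2}^{Q_8} = \nm_I^{Q_8}\nm_{C_2}^I$, feeds in \cref{lemma:norm c2 c4 t-2}, and expands $\nm_I^{Q_8}(-[\sfrac{I}{e}] + 3[\sfrac{I}{C_2}] - 2)$ with the index-$2$ Hill--Mazur formula (trivial Weyl action). The only inputs are the orbit counts $\nm_I^{Q_8}[\sfrac{I}{e}] = 2[\sfrac{Q_8}{e}]$ and $\nm_I^{Q_8}[\sfrac{I}{C_2}] = [\sfrac{Q_8}{J}] + [\sfrac{Q_8}{K}]$.

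The paper's route stays inside the Tambara-functor calculus used throughout and produces the answer directly. However, it relies on the Hill--Mazur formula beyond the abelian case, which the paper simply asserts for Dedekind groups, and it requires a fair amount of cross-term bookkeeping.

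Your mark computation replaces all of this with two easy steps. First, you count $L$-orbits on $Q_8/C_2$, which is immediate because $C_2$ is central. Second, you do a linear comparison. Your values $(0,16,4,4,4,-2)$ at $(e,C_2,I,J,K,Q_8)$ are right on both sides, and so is your $C_4$ sanity check.

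The price is the fact you flag: that $\varphi_L\circ\nm_{C_2}^{Q_8}$ equals the double-coset product of marks on all of $A(C_2)$, not only on genuine $C_2$-sets. This is standard; it says the mark map is compatible with norms, i.e.\ it is a map from the Burnside Tambara functor to its ghost. Your polynomiality argument is the right proof. Both sides are polynomial maps $A(C_2)\to\Z$ that agree on the cone of genuine sets. A polynomial map of degree at most $d$ is determined on $m-m'$ by the vanishing of $\Delta_{m'}^{d+1}$ at $m-m'$, so it is determined by its values on the monoid.

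Finally, as written, your argument verifies the given formula rather than deriving it. Inverting the triangular table of marks would recover the coefficients just as easily, though.
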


\begin{proof}
	Because norms are functorial, we have  
	\[
		\nm_{C_2}^{Q_8}(t-2) 
			= \nm_{I}^{Q_8}\nm_{C_2}^I(t-2)
			= \nm_{I}^{Q_8}(-[\sfrac{I}{e}] + 3 [\sfrac{I}{C_2}] - 2),
	\]
	where the identification of $\nm_{C_2}^I(t-2)$ comes from \cref{lemma:norm c2 c4 t-2}. We can again compute this using the formula for the norm of a sum from \cite{HM}*{Theorem 2.5}, which is stated for finite abelian groups, but holds more generally for finite Dedekind groups like $Q_8$.  
	In this Tambara functor, the Hill--Mazur formula takes the form: 
	\begin{equation}
	\label{norm of sum I to Q}
		\nm_{I}^{Q_8}(a + b) =  \nm_{I}^{Q_8}(a) + \nm_{I}^{Q_8}(b) + \tr_{I}^{Q_8}(ab). 
	\end{equation}
	To apply this formula, we must know 
	\begin{align*}
		\nm_{I}^{Q_8}([\sfrac{I}{e}]) &= [\Fin^{I}(Q_8,I/e)] = 2[\sfrac{Q_8}{e}] \\
		\nm_I^{Q_8}([\sfrac{I}{C_2}]) &= [\Fin^I(Q_8,I/C_2)] = [\sfrac{Q_8}{J}] + [\sfrac{Q_8}{K}],
	\end{align*}
	which can be found by counting orbits. 
\end{proof}

We now identify the quotient of $A(Q_8)$ by this ideal.

\begin{proposition}
	The quotient of $\uA(Q_8/Q_8)$ by $\ufa(Q_8/Q_8)$ is 
	\[
		(\uA / \ufa)(Q_8/Q_8) \cong \Z[\bar u_I,\bar u_K] / (\bar u_I^2, \bar u_K^2, \bar u_I \bar u_K, 2 \bar u_I, 2 \bar u_K),
	\]
	where $\bar u_I$ is the image of $[\sfrac{Q_8}{I}] - 2$ and $\bar u_K$ is the image of $[\sfrac{Q_8}{K}] - 2$.
\end{proposition}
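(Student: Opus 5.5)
The plan is to compute the quotient entirely inside the Burnside ring. By \cref{lemma:generators at Q/Q}, $\ufa(Q_8/Q_8)$ is the ring ideal generated by the five elements recorded after that lemma. So the task reduces to a finite integer linear-algebra problem in $A(Q_8)$.

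First I will fix the additive $\Z$-basis $1, [\sfrac{Q_8}{C_2}], [\sfrac{Q_8}{I}], [\sfrac{Q_8}{J}], [\sfrac{Q_8}{K}], [\sfrac{Q_8}{e}]$ of $A(Q_8)$. Next I will write down the multiplication table. Since every subgroup of $Q_8$ is normal, the double coset formula gives
\[
[\sfrac{Q_8}{H}][\sfrac{Q_8}{L}] = \tfrac{|Q_8|}{|HL|}\,[\sfrac{Q_8}{H\cap L}].
\]
For example, this yields $[\sfrac{Q_8}{I}]^2 = 2[\sfrac{Q_8}{I}]$, $[\sfrac{Q_8}{I}][\sfrac{Q_8}{J}] = [\sfrac{Q_8}{C_2}]$, $[\sfrac{Q_8}{I}][\sfrac{Q_8}{C_2}] = 2[\sfrac{Q_8}{C_2}]$, and $[\sfrac{Q_8}{H}][\sfrac{Q_8}{e}] = \tfrac{8}{|H|}[\sfrac{Q_8}{e}]$. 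The ideal $\ufa(Q_8/Q_8)$ is then the $\Z$-span of the products of the five generators with the six basis elements, which is at most $30$ vectors in $\Z^6$.

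Rather than running a full Smith normal form blindly, I will reduce by hand in stages. The generators themselves give the relations $[\sfrac{Q_8}{e}] = 2[\sfrac{Q_8}{C_2}]$ and $[\sfrac{Q_8}{C_2}] = 2[\sfrac{Q_8}{H}]$ for each $H\in\{I,J,K\}$. These eliminate $[\sfrac{Q_8}{e}]$ and $[\sfrac{Q_8}{C_2}]$, so the quotient is generated by $1$ and $u_H = [\sfrac{Q_8}{H}]-2$. In these coordinates the norm generator becomes $-5u_I + 3u_J + 3u_K$.

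I will then feed in the products with basis elements. From the multiplication table, $u_I^2 = -2u_I$ and
\[
u_Iu_J = [\sfrac{Q_8}{C_2}] - 2[\sfrac{Q_8}{I}] - 2[\sfrac{Q_8}{J}] + 4,
\]
and the multiples of the generators such as $[\sfrac{Q_8}{J}]\cdot \tr_I^{Q_8}\nm_{C_2}^I(t-2)$ should produce relations like $2u_I = 0$ and $2u_J=0$. Combining these with the norm relation should force $u_J = u_I + u_K$ modulo $2$, together with $u_Hu_L = 0$ and $u_H^2=0$.

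The key steps, in order, are:
\begin{enumerate}
\item compute all generator-times-basis products;
\item reduce modulo the linear relations above to the lattice spanned by $1, u_I, u_J, u_K$;
\item show that the resulting relation lattice is exactly $\langle 2u_I, 2u_K, u_J - u_I - u_K\rangle$, with no relation involving $1$.
\end{enumerate}
For the absence of a relation involving $1$, I will use the mark homomorphism at $Q_8$ (cardinality of fixed points), or equivalently the map to $\ulZ(Q_8/Q_8)=\Z$. This map kills every generator and sends $1\mapsto 1$, so $1$ has infinite order in the quotient. To see that the $\F_2^2$ part does not collapse further, I will exhibit a ring map $A(Q_8) \to \Z\oplus\F_2^2$. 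This map is built from the mod-$2$ fixed-point counts at $I$ and $K$ relative to the count at $Q_8$. I will check that it vanishes on the five generators, and therefore on the whole ideal, and that it is surjective.

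I expect the main obstacle to be step 3: pinning down precisely which $2$-torsion survives. In particular I must verify that $u_I$ and $u_K$ remain independent and nonzero, and that no product relation accidentally kills them. The cleanest route is the explicit detecting homomorphism just described. Finally, the ring structure, namely $\bar u_I^2=\bar u_K^2=\bar u_I\bar u_K=0$, follows from the product formulas together with $2\bar u=0$.
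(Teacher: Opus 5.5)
Your architecture matches the paper's. You use the five listed generators to eliminate $[\sfrac{Q_8}{e}]$ and $[\sfrac{Q_8}{C_2}]$ for an upper bound, then detect the answer with a ring map $A(Q_8)\to\Z\oplus\F_2^2$ killing those generators. In fact the $\Z$-span of the five generators alone already gives $2u_I=2u_J=2u_K$ and $-u_I+u_J+u_K=0$, hence the full answer. The thirty products add nothing; e.g.\ $[\sfrac{Q_8}{J}]\cdot\tr_I^{Q_8}\nm_{C_2}^I(t-2)=-2\tr_{C_2}^{Q_8}(t-2)$.

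The genuine gap is in the lower bound, which you rightly flag as the crux: the detecting map you describe does not vanish on $\ufa(Q_8/Q_8)$. First, the mark $\phi_{Q_8}$ is not the augmentation to $\ulZ(Q_8/Q_8)$. The augmentation is $[X]\mapsto|X|$, the mark at $e$, whereas $\phi_{Q_8}\bigl(\nm_{C_2}^{Q_8}(t-2)\bigr)=-2$. So only the cardinality map works. Second, for a $2$-group all marks agree mod $2$, so ``mod-$2$ fixed-point counts at $I$ relative to $Q_8$'' is identically zero. The natural refinement $x\mapsto\tfrac12\bigl(\phi_I(x)-\phi_{Q_8}(x)\bigr)\bmod 2$ is a genuine $\F_2$-valued derivation. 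However, the norm generator has marks $4$ at $I$ and $-2$ at $Q_8$, so it is sent to $3\equiv 1$, and the same happens for $K$. As written, nothing rules out further collapse of the torsion.

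The fix is to compare with $C_2$ rather than $Q_8$. Set $d_H(x)=\tfrac12\bigl(\phi_{C_2}(x)-\phi_H(x)\bigr)\bmod 2$ for $H\in\{I,K\}$; these satisfy $d_H(xy)\equiv|x|\,d_H(y)+|y|\,d_H(x)\pmod 2$. On the basis $1,[\sfrac{Q_8}{e}],[\sfrac{Q_8}{C_2}],[\sfrac{Q_8}{I}],[\sfrac{Q_8}{J}],[\sfrac{Q_8}{K}]$, $d_K$ takes the values $0,0,0,1,1,0$ and $d_I$ takes $0,0,0,0,1,1$. Both vanish on all five generators. Hence $x\mapsto|x|+d_K(x)\bar u_I+d_I(x)\bar u_K$ is a surjective ring map onto $\Z[\bar u_I,\bar u_K]/(\bar u_I^2,\bar u_K^2,\bar u_I\bar u_K,2\bar u_I,2\bar u_K)$ that kills the ideal.

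This is exactly the paper's $q$, which sends $[\sfrac{Q_8}{e}]\mapsto 8$, $[\sfrac{Q_8}{C_2}]\mapsto 4$, $[\sfrac{Q_8}{I}]\mapsto\bar u_I+2$, $[\sfrac{Q_8}{J}]\mapsto\bar u_I+\bar u_K+2$ and $[\sfrac{Q_8}{K}]\mapsto\bar u_K+2$. The paper checks directly that $q$ is a ring map annihilating the generators. It then computes $\ker q\subset\Z^6$ and writes its generators $r_1,\dots,r_5$ as $\Z$-combinations of the generators of $\ufa(Q_8/Q_8)$, which gives both inclusions at once.
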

	
\begin{proof}
Consider the ring homomorphism $q\colon A(Q_8) \to \Z[\bar u_I,\bar u_K] / (\bar u_I^2, \bar u_K^2, \bar u_I \bar u_K, 2 \bar u_I, 2 \bar u_K)$ defined by
\[
	q( [\sfrac{Q_8}{e}] ) = 8, \qquad q( [\sfrac{Q_8}{C_2}] ) = 4
\]
\[
	q( [\sfrac{Q_8}{I}]) = \bar u_I + 2, \qquad q( [\sfrac{Q_8}{J}] ) = \bar u_I + \bar u_K + 2, \qquad q( [\sfrac{Q_8}{K}]) = \bar u_K + 2.
\]

The homomorphism $q$ annihilates the generators of the ideal $\ufa ( Q_8/Q_8 )$ listed above. On the other hand, $q$ is a group homomorphism between finitely generated abelian groups, of the form $q\colon \Z^6 \to \Z \oplus \Z/2^2$.
Thus the kernel can be computed as the intersection of the kernels of the corresponding homomorphisms from $\Z^6$ to $\Z$ and to $\Z/2$. A set of generators for this kernel is
\[
	r_1 = [\sfrac{Q_8}{e}] - 2 [\sfrac{Q_8}{C_2}], \qquad r_2 = [\sfrac{Q_8}{C_2}] - 2 [\sfrac{Q_8}{I}], 
\]
\[
	r_3 = 2 [\sfrac{Q_8}{I}] - 4 , \qquad r_4 = 2 [\sfrac{Q_8}{K}] - 4 , \qquad r_5 = [\sfrac{Q_8}{I}] + [\sfrac{Q_8}{J}] + [\sfrac{Q_8}{K}] - 6.
\]
It therefore suffices to express the generators $r_1, \dots, r_5$ as linear combinations of the generators of $\ufa( Q_8/Q_8 )$. Such expressions are given by
\[
	r_1 = \tr_{C_2}^{Q_8} (t-2), \qquad r_2 = \tr_I^{Q_8} \nm_{C_2}^I (t-2) + \tr_{C_2}^{Q_8} (t-2),
\]
\[
	r_3 = 2 \nm_{C_2}^{Q_8} (t-2) + 12 \tr_{C_2}^{Q_8} (t-2) + 2 \tr_{I}^{Q_8} \! \nm_{C_2}^I (t-2) + 3 \tr_{J}^{Q_8} \! \nm_{C_2}^J (t-2) + 3 \tr_{K}^{Q_8} \! \nm_{C_2}^K (t-2) ,
\]
\[
	r_4 = 2 \nm_{C_2}^{Q_8} (t-2) + 12 \tr_{C_2}^{Q_8} (t-2) + 3 \tr_{I}^{Q_8} \! \nm_{C_2}^I (t-2) + 3 \tr_{J}^{Q_8} \! \nm_{C_2}^J (t-2) + 2 \tr_{K}^{Q_8} \! \nm_{C_2}^K (t-2) ,
\]
\[
	r_5 = 3 \nm_{C_2}^{Q_8} (t-2) + 18 \tr_{C_2}^{Q_8} (t-2) + 4 \tr_{I}^{Q_8} \! \nm_{C_2}^I (t-2) + 4 \tr_{J}^{Q_8} \! \nm_{C_2}^J (t-2) + 4 \tr_{K}^{Q_8} \! \nm_{C_2}^K (t-2) .
\]
It follows that the kernel of $q$ agrees with $\ufa (Q_8/Q_8)$.
\end{proof}

\begin{theorem}
\label{thm:NZ}
	The normed Tambara functor $n_{C_2}^{Q_8}\underline{\Z}$ and its underlying Mackey functor $N_{C_2}^{Q_8}\underline{\Z}$ are given by the following, where $\bar u_I$ is the image of $[\sfrac{Q_8}{I}] - 2$ and $\bar u_K$ is the image of $[\sfrac{Q_8}{K}] - 2$. 
	\bigskip
	\begin{center}
		\begin{tikzpicture}[yscale=1.25]
			\node(Q) at (0,6) {$\Z[\bar u_I,\bar u_K]/(\bar u_I^2,\bar u_K^2,\bar u_I \bar u_K,2\bar u_I,2\bar u_K)$};
			\node(I) at (-3,3.5) {$\Z$};
			\node(J) at (0,3.5) {$\Z$};
			\node(K) at (3,3.5) {$\Z$};
			\node(C) at (0,2) {$\Z$};
			\node(e) at (0,0) {$\Z$};

			\draw[->] (Q) to node[fill=white,rotate=45,scale=0.65] {$\bar u_I, \bar u_K \mapsto 0$} (I);
			\draw[->] (Q) to node[fill=white,rotate=-90,scale=0.65] {$\bar u_I, \bar u_K \mapsto 0$} (J);
			\draw[->] (Q) to node[fill=white,rotate=-45,scale=0.65] {$\bar u_I, \bar u_K \mapsto 0$} (K);
			\draw[->] (I) to node[fill=white] {\tiny$1$} (C);
			\draw[->] (J) to node[fill=white] {\tiny$1$} (C);
			\draw[->] (K) to node[fill=white] {\tiny$1$} (C);
			\draw[->] (C) to node[fill=white] {\tiny$1$} (e);

			\draw[->,bend right=20,\inductioncolor] (I) to node[fill=white, scale=0.65] {$\bar{u}_I + 2$} (Q);
			\draw[->,bend right=25,\inductioncolor] (J) to node[near start, fill=white,right, rotate=90, scale=0.65] {$\bar{u}_I + \bar{u}_J + 2$} (Q);
			\draw[->,bend right=20,\inductioncolor] (K) to node[fill=white, scale=0.65] {$\bar{u}_K + 2$} (Q);
			\draw[->,bend right=20,\inductioncolor] (C) to node[above right,scale=0.65] {$2$} (I);
			\draw[->,bend right=20,\inductioncolor] (C) to node[right,scale=0.65] {$2$} (J);
			\draw[->,bend right=20,\inductioncolor] (C) to node[below right,scale=0.65] {$2$} (K);
			\draw[->,bend right=20,\inductioncolor] (e) to node[right,scale=0.65] {$2$} (C);
	
			\draw[->,bend left=20,\normcolor] (I) to node[left,rotate=30] {} (Q);
			\draw[->,bend left=20,\normcolor] (J) to node[left,rotate=90] {} (Q);
			\draw[->,bend left=20,\normcolor] (K) to node[left] {} (Q);
			\draw[->,bend left=20,\normcolor] (C) to node[below left,scale=0.65] {$(-)^2$} (I);
			\draw[->,bend left=20,\normcolor] (C) to node[left,scale=0.65] {$(-)^2$} (J);
			\draw[->,bend left=20,\normcolor] (C) to node[above left,scale=0.65] {$(-)^2$} (K);
			\draw[->,bend left=20,\normcolor] (e) to node[left,scale=0.65] {$(-)^2$} (C);
		\end{tikzpicture}
		\hspace*{1cm}
		\begin{tikzpicture}[yscale=1.25]
			\node(Q) at (0,6) {$\Z \oplus \F_2\{\bar u_I,\bar u_K\}$};
			\node(I) at (-3,3.5) {$\Z$};
			\node(J) at (0,3.5) {$\Z$};
			\node(K) at (3,3.5) {$\Z$};
			\node(C) at (0,2) {$\Z$};
			\node(e) at (0,0) {$\Z$};

			\draw[->>,bend right=20] (Q) to node[fill=white,scale=0.5] {$\begin{bmatrix} 1 & 0 & 0 \end{bmatrix}$} (I);
			\draw[->>,bend right=20] (Q) to node[fill=white, scale=0.5] {$\begin{bmatrix} 1 & 0 & 0 \end{bmatrix}$} (J);
			\draw[->>,bend right=20] (Q) to node[fill=white, scale=0.5] {$\begin{bmatrix} 1 & 0 & 0 \end{bmatrix}$} (K);
			\draw[->,bend right=20] (I) to node[fill=white,scale=0.65] {$1$} (C);
			\draw[->,bend right=20] (J) to node[fill=white,scale=0.65] {$1$} (C);
			\draw[->,bend right=20] (K) to node[fill=white,scale=0.65] {$1$} (C);
			\draw[->,bend right=20] (C) to node[fill=white,scale=0.65] {$1$} (e);

			\draw[->,bend right=20,\inductioncolor] (I) to node[fill=white,scale=0.5] {$\begin{bmatrix} 2 \\ 1\\ 0\end{bmatrix}$} (Q);
			\draw[->,bend right=20,\inductioncolor] (J) to node[fill=white, scale=0.5] {$\begin{bmatrix} 2 \\ 1\\ 1\end{bmatrix}$} (Q);
			\draw[->,bend right=20,\inductioncolor] (K) to node[fill=white,scale=0.5] {$\begin{bmatrix} 2 \\ 0\\ 1\end{bmatrix}$} (Q);
			\draw[->,bend right=20,\inductioncolor] (C) to node[above right,scale=0.65] {$2$} (I);
			\draw[->,bend right=20,\inductioncolor] (C) to node[right,scale=0.65] {$2$} (J);
			\draw[->,bend right=20,\inductioncolor] (C) to node[below right,scale=0.65] {$2$} (K);
			\draw[->,bend right=20,\inductioncolor] (e) to node[right,scale=0.65] {$2$} (C);
		\end{tikzpicture}
	\end{center}
	In the Tambara functor $n_{C_2}^{Q_8}\underline{\Z}$, the norms $\nm_{C_4}^{Q_8}$ are determined by $1 \mapsto 1$, $0 \mapsto 0$, and the formula for the norm of a sum \cref{norm of sum I to Q}.
\end{theorem}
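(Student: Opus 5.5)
The plan is to assemble the Tambara functor $n_{C_2}^{Q_8}\ulZ \cong \uA/\ufa$ level by level, using the pushout description established above. At each level $Q_8/H$ the value is $\uA(Q_8/H)/\ufa(Q_8/H)$, and the structure maps are induced from those of the Burnside Tambara functor $\uA$. The underlying Mackey functor is then $N_{C_2}^{Q_8}\ulZ$ by \zcref{TambMackNorms}.

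\textbf{Levels.} By \zcref{contained in augmentation ideal}, $\ufa\subseteq\augmentationideal$ at every level. Lower levels:
\begin{itemize}
\item At $Q_8/e$, the generator restricts to zero, so $\ufa(Q_8/e)=0=\augmentationideal(Q_8/e)$, since $A(e)=\Z$.
\item At $Q_8/C_2$, the element $t-2$ additively generates $\augmentationideal(Q_8/C_2)$, so the quotient is $\Z$.
\item At each $Q_8/C_4$, \zcref{QuotientC4Value} gives $\Z$.
\end{itemize}
In each of these cases $\ufa=\augmentationideal$, so the quotient is the constant value $\Z$ and the map to it is the cardinality map $|X|$ (for $H$-sets). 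Top level: the preceding proposition gives $\Z[\bar u_I,\bar u_K]/(\dots)$, with quotient map $q$. Additively this is $\Z\oplus\F_2\{\bar u_I,\bar u_K\}$.

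\textbf{Structure maps below the top.} Since the quotients below the top are computed by cardinality, the maps are read off directly:
\begin{itemize}
\item Restrictions are $1$, since cardinality is preserved.
\item Transfers are multiplication by the index $2$.
\item Norms are $x\mapsto x^2$, since $|\Fin^H(L,X)|=|X|^{[L:H]}$ for an index-2 inclusion $H\le L$.
\end{itemize}

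\textbf{Maps involving the top level.}
\begin{itemize}
\item The transfer $\tr_I^{Q_8}(1)=[\sfrac{Q_8}{I}]$ maps under $q$ to $\bar u_I+2$, and similarly for $J$ and $K$. In the basis $(1,\bar u_I,\bar u_K)$ this gives the column vectors shown.
\item For restriction from $Q_8/Q_8$ to $Q_8/I$, compose with cardinality: $\res^{Q_8}_I[\sfrac{Q_8}{L}]$ has cardinality $|Q_8/L|$. Hence $\res(\bar u_I)=|Q_8/I|-2=0$, and likewise $\bar u_K\mapsto 0$, while $1\mapsto 1$. This gives the row $[1\ 0\ 0]$.
\item The norms $\nm_{C_4}^{Q_8}$ are inherited from $\uA$. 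Since $\Z$ at level $Q_8/C_4$ is generated by $1$ under sums and negatives, the norm is determined by $\nm(1)=1$, $\nm(0)=0$, and the Hill--Mazur sum formula \cref{norm of sum I to Q}. The same formula applies to $-1$, as in \zcref{lemma:norm c2 c4 t-2}.
\end{itemize}

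\textbf{Well-definedness and the main obstacle.} All of these maps are automatically well defined, because $\ufa$ is a Tambara ideal. The only step needing real care, and so the main obstacle, is the top-level identification, which is already done in the preceding proposition. What remains in this proof is checking that each $q$-image, for example $q(\tr_J^{Q_8}1)=\bar u_I+\bar u_K+2$, matches the figure.
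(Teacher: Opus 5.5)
Your proposal is correct and follows essentially the paper's own route. You identify $n_{C_2}^{Q_8}\ulZ$ with $\uA/\ufa$ and use $\ufa\subseteq\augmentationideal$ with equality at every level below $Q_8/Q_8$: trivially at $Q_8/e$ and $Q_8/C_2$, and by \cref{QuotientC4Value} at the order-$4$ subgroups. You then take the top level from the proposition computing $\ker q$ and read off the structure maps via cardinality and $q$. The one step to tighten is the norms: counting $|\Fin^H(L,X)|$ only gives the squaring formula on honest sets, so for negative integers appeal instead to the augmentation $\uA\to\ulZ$ being a Tambara morphism, or to the sum formula (e.g.\ $\nm_{C_2}^{C_4}(-1)=[\sfrac{C_4}{C_2}]-1\mapsto 1$).
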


Here and in \cref{tab-Q8Mackey}, the Lewis diagrams are displayed with the values at the orbits $Q_8/I$, $Q_8/J$, and $Q_8/K$ from left to right.

\subsection{Comparison to Restrictions and Geometric Fixed Points}
\label{sec:compare}

The restriction of $N_{C_2}^{Q_8} \ulZ$ to any of the $C_4$ subgroups is the constant Mackey functor $\ulZ$. This is expected. First, writing $C_4$ indiscriminately for any choice of $C_4$ subgroup, we can rewrite the norm to $Q_8$ as the two-stage norm
\[
	N_{C_2}^{Q_8} \ulZ = N_{C_4}^{Q_8} N_{C_2}^{C_4} \ulZ.
\]
The $C_4$-norm $N_{C_2}^{C_4} \ulZ$ is isomorphic to $\ulZ$. This can be verified directly, using the pushout technique. Alternatively, it follows from the computation 
\[
	N_{C_2}^{C_4} \ulZ \cong N_{C_2}^{C_4} \upi_0 MU\R \cong \upi_0 N_{C_2}^{C_4} MU\R \cong \ulZ
\]
of \cite{HHR}. But now we have
\[
	\downarrow^{Q_8}_{C_4} \! N_{C_2}^{Q_8} \ulZ \cong \, \downarrow^{Q_8}_{C_4} \! N_{C_4}^{Q_8} \ulZ \cong \ulZ^{\boxtimes 2} \cong \ulZ
\]
by \cite{BGHL}*{Corollary~3.17}.

Similarly, we can consider geometric fixed points, as in \cite{BGHL}*{Section~5.2}. In particular, \cite{BGHL}*{Theorem~5.15} gives
\[
	\Phi^{C_2} N_{C_2}^{Q_8} \ulZ \cong N_e^{C_2\times C_2} \Phi^{C_2} \ulZ \cong N_e^{C_2\times C_2} \F_2.
\]
On the other hand, the geometric fixed points $\Phi^{C_2}$ of a $Q_8$-Mackey functor are the $C_2\times C_2$-Mackey functor obtained by modding out by transfers from the trivial subgroup. This, coupled with \cref{thm:NZ}, recovers the computation of \cite{GKM}*{Figure 2.2}.

\section{\texorpdfstring{$RO(Q_8)$}{RO(Q_8)}-graded Homotopy}
We proceed in the same fashion as in \cite{GKM}, comparing to an Eilenberg-Mac Lane $Q_8$-spectrum whose homotopy is already known. We note that there is a short exact sequence
\begin{equation}
\label{eq:mainses}
\infl{\F_2}^2  \into N_{C_2}^{Q_8} \ulZ \onto \ulZ,
\end{equation}
where $\infl{\F_2}$ is the inflated Mackey functor with $\F_2$ at $Q_8/Q_8$ and zeros elsewhere (see \cref{tab-Q8Mackey}). This short exact sequence yields a cofiber sequence
\begin{equation}
\label{eq:maincofiber}
\HQ \infl{\F_2}^2  \to \HQ N_{C_2}^{Q_8} \ulZ \to  \HQ \ulZ.
\end{equation} 

The efficiency of this approach is captured in the fact that the kernel $\infl{\F_2}^2$ presented in \zcref{eq:mainses} is an inflated Mackey functor. This implies that $\HQ \infl{\F_2}^2$ is invariant under 
suspension by any representation with zero-dimensional fixed points
and gives the following comparison.

\begin{proposition}
\label{prop:augment}
Let $V\in RO(Q_8)$.
The augmentation $N_{C_2}^{Q_8} \ulZ \onto \ulZ$ induces an isomorphism 
\[
	\upi_{V} \HQ N_{C_2}^{Q_8} \ulZ \cong \upi_{V} \HQ \ulZ
\]
when the dimension of the $Q_8$-fixed points of $V$ is not $0$ or $1$.
\end{proposition}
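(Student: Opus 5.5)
The plan is to use the long exact sequence in $\upi_V$ associated to the cofiber sequence \cref{eq:maincofiber}. This reduces the claim to the vanishing statement
\[
\upi_{V}\HQ\infl{\F_2} = 0 \quad\text{and}\quad \upi_{V-1}\HQ\infl{\F_2}=0
\]
whenever $\dim V^{Q_8}\notin\{0,1\}$. Given this vanishing, the map $\upi_V\HQ N_{C_2}^{Q_8}\ulZ\to\upi_V\HQ\ulZ$ is both injective and surjective.

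The vanishing comes from the fact that $\infl{\F_2}$ is inflated from the trivial quotient $Q_8/Q_8$. The steps are as follows.

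\begin{enumerate}
\item Since $\infl{\F_2}$ is supported only at $Q_8/Q_8$, with all restrictions and transfers zero, the spectrum $\HQ\infl{\F_2}$ is concentrated over the family of all proper subgroups. Concretely, $\HQ\infl{\F_2}\simeq \widetilde{E\mathcal P}\wedge \HQ\infl{\F_2}$, where $\mathcal P$ is the family of proper subgroups. Equivalently, its underlying $H$-spectrum is contractible for every proper $H<Q_8$; this is immediate because every value of $\infl{\F_2}$ below the top level is zero.
\item For a representation $W$ with $W^{Q_8}=0$, the inclusion $S^0\to S^W$ is an equivalence after smashing with $\widetilde{E\mathcal P}$. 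Its cofiber is built from cells $Q_8/H_+\wedge D^n$ with $H$ proper, and smashing such a cell with $\widetilde{E\mathcal P}$ gives something contractible. Writing $V = n + W$ with $n=\dim V^{Q_8}$ and $W^{Q_8}=0$, this yields
\[
\Sigma^{-V}\HQ\infl{\F_2}\simeq \Sigma^{-n}\HQ\infl{\F_2}.
\]
For virtual $V$, apply the same argument to both the positive and the negative part.
\item It follows that $\upi_V\HQ\infl{\F_2}\cong\upi_n\HQ\infl{\F_2}$. This is an Eilenberg--Mac~Lane spectrum, so the group is $\infl{\F_2}$ when $n=0$ and $0$ otherwise. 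The group $\upi_{V-1}$ corresponds to $n-1$, which is nonzero exactly when $n\neq 1$.
\end{enumerate}

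Together, these cover the excluded cases $n=0$ and $n=1$, which finishes the argument.

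The main obstacle is step 2 for virtual representations, together with identifying $\HQ\infl{\F_2}$ as $\widetilde{E\mathcal P}$-local. For the latter, I would check it on homotopy Mackey functors: the map $\HQ\infl{\F_2}\to\widetilde{E\mathcal P}\wedge\HQ\infl{\F_2}$ is an equivalence because its fiber $E\mathcal P_+\wedge\HQ\infl{\F_2}$ is built from cells induced from proper subgroups, and $\HQ\infl{\F_2}$ restricts to zero on each of those subgroups. Everything else is a routine long exact sequence argument.
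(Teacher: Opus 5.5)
Your proof is correct and follows the paper's argument. The paper also writes $V=n+\overline{V}$, uses the cofiber sequence with fiber $\Sigma^{-\overline{V}}\HQ\infl{\F_2}^2\simeq\HQ\infl{\F_2}^2$, and reads off the isomorphism from the long exact sequence in $\upi_n$ for $n\neq 0,1$. Your $\widetilde{E\mathcal{P}}$ argument fills in the suspension-invariance of the inflated Eilenberg--Mac~Lane spectrum, which the paper only asserts. Working with $\infl{\F_2}$ rather than the actual kernel $\infl{\F_2}^2$ is harmless, since the kernel is a direct sum of copies of it.
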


In particular, this says that the augmentation induces an isomorphism in degrees $n+k\orho$ when $n\neq 0,1$.

\begin{proof}
Let us write $V = n + \overline{V}$. Then the claim is that the augmentation induces an isomorphism $\upi_n \Sigma^{-\overline{V}} \HQ N_{C_2}^{Q_8} \ulZ \cong \upi_n \Sigma^{-\overline{V}}   \HQ \ulZ$ for $n$ not equal to 0 or 1. The fiber of the $\overline{V}$-desuspended augmentation is $\Sigma^{-\overline{V}} \HQ \infl{\F_2}^2 \simeq \HQ \infl{\F_2}^2$. The result follows.
\end{proof}

The Mackey functors $\upi_{\blacklozenge} \HQ \ulZ$ were computed in \cite{GS}*{Section 4} and are displayed in \zcref{fig:posconeZ} and \zcref{fig:negconeZ}. 
We collect the homotopy Mackey functors outside of the range of \zcref{prop:augment} in \zcref{prop:manual}.

\begin{proposition}
\label{prop:manual}
The non-zero homotopy Mackey functors $\upi_{k\overline{\rho}}(\HQ N_{C_2}^{Q_8} \ulZ)$ are as follows:  
\[	
\upi_{k\overline{\rho}}(\HQ N_{C_2}^{Q_8} \ulZ) \cong 
\begin{cases}
	\infl{\F_2}^2 & k > 0 \\
	N_{C_2}^{Q_8} \ulZ & k=0 \\
	\infl{\F_2} & k < 0. 
\end{cases}
\]
The only non-zero homotopy Mackey functor among $\upi_{1+k\overline{\rho}}(\HQ N_{C_2}^{Q_8} \ulZ)$ for $k \in \Z$ is 
\[ 
	\upi_{1-\overline{\rho}}(\HQ N_{C_2}^{Q_8} \ulZ) \cong \phi_{IJK}^* \ulf.
\]  
\end{proposition}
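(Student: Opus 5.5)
We would compute these homotopy Mackey functors from the long exact sequence of the cofiber sequence \eqref{eq:maincofiber}, desuspended by $k\orho$:
\[
\Sigma^{-k\orho}\HQ \infl{\F_2}^2 \to \Sigma^{-k\orho}\HQ N_{C_2}^{Q_8}\ulZ \to \Sigma^{-k\orho}\HQ\ulZ .
\]
As in the proof of \cref{prop:augment}, we have $\Sigma^{-k\orho}\HQ\infl{\F_2}^2\simeq \HQ\infl{\F_2}^2$. Its homotopy is therefore $\infl{\F_2}^2$ concentrated in integer degree $0$, i.e.\ in degree $k\orho$. The long exact sequence in degrees $n+k\orho$ with $n\in\{0,1\}$ thus reduces to the exact sequence
\[
0\to \upi_{1+k\orho}\HQ N\ulZ \to \upi_{1+k\orho}\HQ\ulZ \xrightarrow{\ \partial\ } \infl{\F_2}^2 \to \upi_{k\orho}\HQ N\ulZ \to \upi_{k\orho}\HQ\ulZ \to 0 .
\]
The inputs $\upi_{k\orho}\HQ\ulZ$ and $\upi_{1+k\orho}\HQ\ulZ$ are read off from \cite{GS}*{Section 4}, as in \zcref{fig:posconeZ,fig:negconeZ}. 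From those figures we expect the following.
\begin{itemize}
\item For $k>0$, both $\upi_{k\orho}\HQ\ulZ$ and $\upi_{1+k\orho}\HQ\ulZ$ vanish, since the positive cone has nothing in these low integer degrees. The sequence then gives $\upi_{k\orho}\cong\infl{\F_2}^2$ and $\upi_{1+k\orho}=0$.
\item For $k=0$, the case is trivial: $\upi_0$ is $N_{C_2}^{Q_8}\ulZ$ and $\upi_1=0$.
\item For $k<0$, in the negative cone, $\upi_{k\orho}\HQ\ulZ$ should vanish. The group $\upi_{1+k\orho}\HQ\ulZ$ should be nonzero only for $k=-1$ and zero for $k\le -2$.
\end{itemize}
I would check all of these claims against the cited computation before using them.

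The main obstacle is the connecting map $\partial$ for $k<0$. For $k\le -2$ we would still need $\upi_{1+k\orho}\HQ\ulZ$ to be a Mackey functor $\underline{M}$ whose map to $\infl{\F_2}^2$ is as dictated by the claimed answer $\infl{\F_2}$. Since $\infl{\F_2}$ is nonzero only at $Q_8/Q_8$, the map $\partial$ is determined by its component at the top level, $\underline{M}(Q_8/Q_8)\to\F_2^2$. The statement predicts the following behaviour of $\partial$:
\begin{itemize}
\item for $k=-1$, its image is a single $\F_2$ and its kernel is $\phi_{IJK}^*\ulf$;
\item for $k\le -2$, its image is again $\F_2$ (so the cokernel is $\infl{\F_2}$), with kernel zero.
\end{itemize}
This would force $\upi_{1+k\orho}\HQ\ulZ$ to be nonzero for all $k<0$. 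That conflicts with my earlier guess, so the precise values must be taken from \cite{GS}.

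To pin down $\partial$, I would use naturality in $\ulZ$-module structure. The cofiber sequence is one of $\HQ N_{C_2}^{Q_8}\ulZ$-modules, hence of $\HQ\ulZ$-modules via restriction, and the top-level values are small: $\F_2$ or $\F_2^2$. I would identify the image of $\partial$ at $Q_8/Q_8$ as follows.
\begin{enumerate}
\item Compute the extension class of \eqref{eq:mainses} in $\mathrm{Ext}^1(\ulZ,\infl{\F_2}^2)$ from \cref{thm:NZ}. Concretely, the transfers from $Q_8/C_4$ hit $2+\bar u_I$, $2+\bar u_I+\bar u_K$ and $2+\bar u_K$, which span $\F_2\{\bar u_I,\bar u_K\}$ modulo $2$.
\item Note that $\partial$ is the Yoneda product of this class with $\upi_{\star}\HQ\ulZ$.
\end{enumerate}
Equivalently, I would compute $\upi_{n+k\orho}$ directly at the $Q_8/Q_8$ level via a cellular chain complex for $S^{-k\orho}$, using the standard $Q_8$-CW structure of $S^{\orho}$ with cells of types $Q_8/I,Q_8/J,Q_8/K$, $Q_8/C_2$ and $Q_8/e$ as in \cite{GS}. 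I would then apply $N_{C_2}^{Q_8}\ulZ$ in place of $\ulZ$: the cochain differentials involve exactly the transfers and restrictions of \cref{thm:NZ}. This direct chain computation is what I would actually carry out. It also produces the identification $\phi_{IJK}^*\ulf$, whose three inflated pieces come from the three $Q_8/C_4$-cells, and it serves to resolve the extension problems in the exact sequence.
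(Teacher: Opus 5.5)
Your argument for $k\ge 0$ is correct and is the paper's argument: for $k>0$ both $\upi_{k\orho}\HQ\ulZ$ and $\upi_{1+k\orho}\HQ\ulZ$ vanish, so the five-term sequence gives $\infl{\F_2}^2$ and $0$. The case $k<0$, however, is not proved, and both inputs you guess for it are false.

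\textbf{What goes wrong for $k<0$.} By \cref{lem:pi0Sigorho}, iterated, $\upi_{k\orho}\HQ\ulZ\cong\infl{\ulZ(Q_8/Q_8)/\mathrm{tr}}=\infl{\F_2}$ for every $k<0$, not $0$. The iteration uses that $\upi_0\Sigma^{\orho}X$ depends only on $\upi_0X$ for connective $X$, and that an inflated functor is its own $\Phi^{Q_8}$. Also, $\upi_{1+k\orho}\HQ\ulZ$ is $\ul{mg}$ for $k=-1$ and $\infl{\F_2}^2$ for $k\le -2$, so it does not vanish for $k\le -2$. Consequently both of your predictions for $\partial$ are wrong. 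In fact $\partial$ is onto $\infl{\F_2}^2$ for every $k<0$, and is an isomorphism for $k\le -2$. The conflict you ran into comes from the false vanishing of $\upi_{k\orho}\HQ\ulZ$, not from the values in \cite{GS}. What remains of your plan is an announced cellular computation of $\upi_{n-m\orho}\HQN$ for $n=0,1$. You do not carry it out, and it would have to be done for every $m\ge 1$. The Yoneda-product description of $\partial$ gives no computation by itself.

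\textbf{The missing idea.} You nearly state it in your step (1). The transfers $2+\bar u_I$, $2+\bar u_K$, $2+\bar u_I+\bar u_K$ generate $2\Z\oplus\F_2\{\bar u_I,\bar u_K\}$. Hence $\QN(Q_8/Q_8)/\mathrm{tr}\cong\F_2$, and the augmentation $\QN\to\ulZ$ is an isomorphism modulo transfers. Combined with \cref{lem:pi0Sigorho}, this shows that $\upi_{k\orho}\HQN\to\upi_{k\orho}\HQ\ulZ$ is an isomorphism $\infl{\F_2}\to\infl{\F_2}$ for all $k<0$. Exactness then forces the map $\infl{\F_2}^2\to\upi_{k\orho}\HQN$ to be zero, so $\partial$ is surjective and $\upi_{1+k\orho}\HQN=\ker\partial$.
\begin{itemize}
\item For $k=-1$, this is the kernel of a surjection $\ul{mg}\to\infl{\F_2}^2$. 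That surjection is an isomorphism at $Q_8/Q_8$, so the kernel is $\phi_{IJK}^*\ulf$.
\item For $k\le -2$, the paper never needs $\upi_{1+k\orho}\HQ\ulZ$. It suspends the connected-cover sequence $P_{\ge1}\Sigma^{\orho}\HQN\to\Sigma^{\orho}\HQN\to\HQ\infl{\F_2}$ by $\orho$. Since $\Sigma^{\orho}\HQ\infl{\F_2}\simeq\HQ\infl{\F_2}$, this gives $\upi_{1-2\orho}\HQN\cong\upi_0\Sigma^{\orho}\HQ\phi^*_{IJK}\ulf=\infl{\Phi^{Q_8}\phi^*_{IJK}\ulf}$. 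This vanishes because $\phi^*_{IJK}\ulf$ is zero at $Q_8/Q_8$; the paper phrases this as $\Sigma^{\orho+1}\HQ\phi^*_{IJK}\ulf\simeq\Sigma^2\HQ\phi^*_{IJK}\ulF$. Iterating the same argument handles all $k\le -2$.
\end{itemize}
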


We will use the following lemma in the proof of \zcref{prop:manual}, which is essentially \cite{BGHL}*{Proposition~5.6}.

\begin{lemma} 
\label{lem:pi0Sigorho}
For any nontrivial finite group $G$ and $G$-Mackey functor $\ul{M}$, the Mackey functor $\upi_0 \Sigma^{\orho} H\ul{M}$ is the inflated Mackey functor
\[
	\upi_0 \Sigma^{\orho} H\ul{M} \cong \Phi^G \ul{M} = \infl{ \ul{M}(G/G)/\mathrm{tr} },
\]
where $\ul{M}(G/G)/\mathrm{tr}$ denotes the quotient of $\ul{M}(G/G)$ by all transfers from proper subgroups.
\end{lemma}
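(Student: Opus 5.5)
The plan is to compute $\upi_0 \Sigma^{\orho} H\ul{M}$ cellularly, using the cofiber sequence $S(\orho)_+ \to S^0 \to S^{\orho}$. Here $\orho$ is the reduced regular representation, so $(\orho)^G = 0$, while for every proper subgroup $H < G$ the restriction $\res_H \orho$ contains a trivial summand. Smashing with $H\ul{M}$ gives a cofiber sequence
\[
S(\orho)_+ \wedge H\ul{M} \to H\ul{M} \to \Sigma^{\orho} H\ul{M}.
\]
Taking $\upi_0$ and using connectivity gives an exact sequence
\[
\upi_0(S(\orho)_+\wedge H\ul{M}) \to \ul{M} \to \upi_0\Sigma^{\orho}H\ul{M} \to \upi_{-1}(S(\orho)_+\wedge H\ul{M}).
\]
The last term vanishes because $S(\orho)_+\wedge H\ul{M}$ is connective.

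The key structural input is that $S(\orho)$ admits a $G$-CW structure whose cells all have proper isotropy. Its fixed points $S(\orho)^G=S(0)$ are empty, so every cell has the form $G/H\times D^k$ with $H<G$. Moreover, $S(\orho)$ is $G$-equivariantly connected with a $0$-skeleton consisting of orbits $G/H$ with $H$ proper. Then $\upi_0(S(\orho)_+\wedge H\ul{M})$ is the cokernel of the cellular map from $1$-cells to $0$-cells. Its image in $\ul{M}$ is generated, at each level $G/L$, by transfers from the isotropy groups of the $0$-cells. I would realize this concretely by taking $0$-cells $\coprod_{H \text{ maximal}} G/H$, or more simply all $G/H$ with $H<G$. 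Then the map $\bigoplus \ul{M}_{G/H} \to \ul{M}$, induced by the collapse maps $G/H_+ \to S^0$, has image exactly the sub-Mackey functor generated by transfers from proper subgroups.

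Next I would identify this image levelwise. At $G/L$ with $L<G$, the restriction of $\orho$ to $L$ has a fixed vector, so $\res_L S(\orho)$ has an $L$-fixed point. The map $\upi_0(S(\orho)_+\wedge H\ul M)(G/L)\to \ul M(G/L)$ is therefore surjective, because it splits via that fixed point. Hence the cokernel vanishes at every proper level. At $G/G$, the image is the sum of transfers $\tr_H^G \ul{M}(G/H)$, so the cokernel there is $\ul{M}(G/G)/\mathrm{tr}$. A Mackey functor that is concentrated at $G/G$ is the inflation $\infl{\ul{M}(G/G)/\mathrm{tr}}$, which by definition is $\Phi^G\ul M$.

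The main obstacle is making the identification of the image at $G/G$ precise, that is, showing that the image is exactly the transfer ideal and nothing smaller. The claim that the image is contained in the transfers is immediate, since all cells are induced from proper subgroups. The containment in the other direction requires that every proper subgroup $H$ occurs as a stabilizer of $S(\orho)$ up to subconjugacy. This holds because $\orho^H\neq 0$, so $S(\orho)^H\neq\emptyset$ and there is a $G$-map $G/H\to S(\orho)$. Alternatively, one can simply cite \cite{BGHL}*{Proposition~5.6}, where this is essentially proved.
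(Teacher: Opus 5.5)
Your argument is correct. The paper gives no proof of its own; it only remarks that the lemma is essentially \cite{BGHL}*{Proposition~5.6}, so your proposal supplies an argument the paper defers.

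Your route is the direct cellular one, using the cofiber sequence $S(\orho)_+\wedge H\ul M\to H\ul M\to\Sigma^{\orho}H\ul M$.
\begin{itemize}
\item Connectivity identifies $\upi_0\Sigma^{\orho}H\ul M$ with the cokernel of $\upi_0(S(\orho)_+\wedge H\ul M)\to\ul M$.
\item At each proper level $G/L$ this map is split surjective, because $S(\orho)^L\neq\emptyset$.
\item At $G/G$ its image is exactly $\sum_{H<G}\tr_H^G\ul M(G/H)$. The upper bound comes from the $0$-skeleton: every isotropy group is proper since $S(\orho)^G=\emptyset$, and the $0$-skeleton surjects onto $\upi_0$. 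The lower bound comes from composing a $G$-map $G/H\to S(\orho)$ with the collapse to $S^0$, which induces $\tr_H^G$.
\end{itemize}

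Compared with the citation, your argument is self-contained and shows exactly what is used: only $\orho^G=0$ and $\orho^H\neq 0$ for proper $H$. The same proof therefore gives $\upi_0\Sigma^{V}H\ul M\cong\infl{\ul M(G/G)/\mathrm{tr}}$ for any actual representation $V$ with those two properties. The citation instead places the statement inside the geometric-fixed-point formalism of \cite{BGHL}, which the paper reuses in its comparison section.

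Two minor slips, neither of which affects the proof:
\begin{itemize}
\item $S(\orho)$ is not equivariantly connected. For an index-two subgroup such as $I\le Q_8$ you have $S(\orho)^I=S(\orho^I)=S^0$, and for $G=C_2$ the sphere $S(\sigma)$ is a free orbit. You never actually use connectivity.
\item You cannot simply choose the $0$-cells of $S(\orho)$ to be $\coprod_{H<G}G/H$. Your final paragraph avoids this anyway: the maps $G/H\to S(\orho)$ coming from nonempty fixed points are what give the lower bound on the image.
\end{itemize}
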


\begin{proof}[Proof of \zcref{prop:manual}]
We use the cofiber sequence \zcref{eq:maincofiber}. 
In this proof, we will abbreviate the Mackey functor $N_{C_2}^{Q_8} \ulZ$ to $\QN$.

For $k > 0$, it is shown in \cite{GS}*{Proposition~4.12}, and displayed in \zcref{fig:negconeZ}, that $\upi_{n+k\orho} \HQ \ulZ$ is concentrated in degrees $n + k \orho$ with $n \leq -4$. Thus the map $\Sigma^{-k\orho} \HQ \infl{\F_2}^2 \to \Sigma^{-k\orho} \HQN$ induces an isomorphism on $\upi_n$ for $n \geq -3$. As $\Sigma^{-k\orho} \HQ \infl{\F_2}^2 \simeq \HQ \infl{\F_2}^2$ is concentrated in degree 0, the claim about $ \upi_{n+k\orho}\HQN$ follows.

The claim about $k=0$ is true by definition of an Eilenberg-Mac~Lane spectrum. Consider now the case of $k=-1$. Here, we wish to compute $\upi_n \Sigma^{\orho} \HQN$ for $n\in \{0,1\}$. By \zcref{lem:pi0Sigorho}, it follows that $\upi_0 \Sigma^{\orho}\HQN$ is $\infl{\F_2}$. Furthermore, the augmentation $\QN \to \ulZ$ induces an isomorphism modulo transfers, so that $\Sigma^{\orho} \HQN \to \Sigma^{\orho} \HQ \ulZ$ induces an isomorphism on $\upi_0$. The long exact sequence 
\[
\begin{tikzcd}[arrows={thick},]
		&
	\Sigma^{\orho} \HQ \infl{\F_2}^2 \simeq \HQ \infl{\F_2}^2
		&
	\Sigma^{\orho} \HQN
		&
	\Sigma^{\orho} \HQ \ulZ
		\\
	n = 1
		&
	\phantom{0}
		&
	??
		\ar[r] 
		\ar[d, phantom, ""{coordinate, name=A}]
		&
	\ul{mg}
		\ar[dll, rounded corners, 
			to path = { -- ([xshift=5mm]\tikztostart.east)
						|- (A)
						-| ([xshift=-5mm]\tikztotarget.west)
						-- (\tikztotarget)
						}] 
		\\
	n = 0
		&
		\infl{\F_2}^2 
		\ar[r] 
		& 
		\infl{\F_2}
		\ar[r]
		&
	\infl{\F_2} 
\end{tikzcd}
\] 
 shows that $\upi_1 \Sigma^{\orho} \HQN$ is the kernel of a surjection $\ul{mg} \to \infl{\F_2}^2$.
 
 Next, for the case $k=-2$, we may start with the cofiber sequence for the connected cover
 \[
 	P_{\geq 1} \Sigma^{\orho} \HQN \to \Sigma^{\orho} \HQN \to  \HQ \infl{ \F_2 }
 \]
 Suspending by $\orho$ gives 
 \[
 	\Sigma^{\orho} P_{\geq 1} \Sigma^{\orho} \HQN \to \Sigma^{2\orho} \HQN \to  \Sigma^{\orho} \HQ \infl{ \F_2 } \simeq \HQ \infl{ \F_2 }.
 \]
 But suspending the next covering sequence for $P_{\geq 1} \Sigma^{\orho} \HQN $ by $\orho$ gives
 \[
 	\Sigma^{\orho} P_{\geq 2} \Sigma^{\orho} \HQN \to \Sigma^{\orho} P_{\geq 1} \Sigma^{\orho} \HQN \to  \Sigma^{\orho+1} \HQ \phi^*_{IJK} \ulf  \simeq \Sigma^2 \HQ \phi^*_{IJK} \ulF.
 \]
 It follows that $\upi_{1-2\orho} \HQN$ vanishes. 
 
 Further suspending by $\orho$ similarly yields the vanishing of $\upi_{1+k\orho} \HQN$ for larger negative values of $k$.
\end{proof}

\begin{table} 
\caption{Some $Q_{8}$-Mackey functors}
\label{tab-Q8Mackey}
{\renewcommand{\arraystretch}{1.4}
\begin{adjustbox}{width=\columnwidth,center,scale=0.85}
\begin{tabular}{|c|c|c|}
	\hline 
	$\square = \ulZ$ &
	$\dbox = \ulZ^*$ & 
         $\circ=\ul B(3,0) $ 
         \\ 
         \hline
         \begin{tikzcd}[bend right=10, swap]
         & \Z \ar[dl,"1"] \ar[d,"1"] \ar[dr,"1"] & \\
         \Z \ar[dr,"1"] \ar[ur,"2", color=\inductioncolor]  & \Z \ar[d,"1"] \ar[u,"2", color=\inductioncolor]  & \Z \ar[ul,"2", color=\inductioncolor]  \ar[dl,"1"] \\
          & \Z \ar[d,"1"] \ar[ul,"2", color=\inductioncolor] \ar[u,"2", color=\inductioncolor] \ar[ur,"2", color=\inductioncolor] \\
          & \Z \ar[u,"2",color=\inductioncolor]
         \end{tikzcd}
         & 
         \begin{tikzcd}[bend right=10, swap]
         & \Z \ar[dl,"2"] \ar[d,"2"] \ar[dr,"2"] & \\
         \Z \ar[dr,"2"] \ar[ur,"1", color=\inductioncolor]  & \Z \ar[d,"2"] \ar[u,"1", color=\inductioncolor]  & \Z \ar[ul,"1", color=\inductioncolor]  \ar[dl,"2"] \\
          & \Z \ar[d,"2"] \ar[ul,"1", color=\inductioncolor] \ar[u,"1", color=\inductioncolor] \ar[ur,"1", color=\inductioncolor] \\
          & \Z \ar[u,"1", color=\inductioncolor]
         \end{tikzcd}
         &
         \begin{tikzcd}[bend right=10, swap]
         & \Z/8 \ar[dl,"1"] \ar[d,"1"] \ar[dr,"1"] & \\
         \Z/4 \ar[dr,"1"] \ar[ur,"2",color=\inductioncolor]  & \Z/4 \ar[d,"1"] \ar[u,"2",color=\inductioncolor]  & \Z/4 \ar[ul,"2"]  \ar[dl,"1"] \\
          & \Z/2  \ar[ul,"2",color=\inductioncolor] \ar[u,"2",color=\inductioncolor] \ar[ur,"2",color=\inductioncolor] \\
          & 0 
         \end{tikzcd}
         \\ 
         \hline
         $\bullet = \phi_{Q_8}^* \F_2 = \infl{\F_2}$
        &
        $ \raisebox{-0.4ex}{$\begin{tikzpicture}\node[draw,fill=white,circle,inner sep=1.25pt, text=black] (0,0) {\tiny $n$};  \end{tikzpicture}$}
        = \infl{\F_2}^n$
        &
        $ \raisebox{0.25ex}{$\phiLDRf$} = \phi_{IJK}^* \ulf$
         \\ 
         \hline
         \begin{tikzcd}[bend right=10, swap]
         & \F_2 & 
         \\
        0 & 0  & 0 
        \\
          &
          0 \\
          &
          0
         \end{tikzcd}
         &
         \begin{tikzcd}[bend right=10, swap]
         & \F_2^n & 
         \\
        0 & 0  & 0 
        \\
          &
          0 \\
          &
          0
         \end{tikzcd}
         &
         \begin{tikzcd}[bend right=10, swap]
         & 0 & \\
	\F_2  & \F_2 & \F_2 \\
	& 0 \\
          & 0
         \end{tikzcd}
         \\ 
         \hline
	$\fillpent=\phi_{IJK}^* \ulF$
        & 
	$\fillpentdual=\phi_{IJK}^* \ulF^*$
        & 
         $ \raisebox{-0.3ex}{$\phiZM$} = \phi_Z^*(\ul B(2,0))$
         \\ 
         \hline
          \begin{tikzcd}[swap]
         & \F_2^3 \ar[dl,"p_1"] \ar[d,"p_2"] \ar[dr,"p_3"'] & \\
         \F_2  & \F_2   & \F_2   \\
          & 0  \\
          & 0 
         \end{tikzcd}
          &
          \begin{tikzcd}
         & \F_2^3  & \\
         \F_2 \ar[ur,"i_1",color=\inductioncolor]  & \F_2  \ar[u,"i_2",color=\inductioncolor]  & \F_2  \ar[ul,"i_3"',color=\inductioncolor]  \\
          & 0  \\
          & 0 
         \end{tikzcd}
         &
          \begin{tikzcd}[bend right=10, swap]
         & \Z/4 \ar[dl,->>] \ar[d,->>] \ar[dr,->>] & \\
         \Z/2  \ar[ur,"2",color=\inductioncolor]  & \Z/2 \ar[u,"2",color=\inductioncolor] & \Z/2 \ar[ul,"2",color=\inductioncolor]  \\
          & 0   \\
          & 0 
         \end{tikzcd}
         \\ 
         \hline
         $\filltrap = \ul{mg}$
         &
         $\filltrapdual = \ul{mg}^*$ 
         &
         $ \raisebox{-0.2ex}{$\mgwsymbol$} = \mgw$
         \\ 
         \hline
          \begin{tikzcd}[swap]
         & \F_2^2 \ar[dl,"p_1"] \ar[d,"+"] \ar[dr,"p_2"'] & \\
         \F_2  & \F_2   & \F_2   \\
          & 0  \\
          & 0 
         \end{tikzcd}
          &
          \begin{tikzcd}
         & \F_2^2  & \\
         \F_2 \ar[ur,"i_1",color=\inductioncolor]  & \F_2  \ar[u,"\Delta",color=\inductioncolor]  & \F_2  \ar[ul,"i_2"',color=\inductioncolor]  \\
          & 0  \\
          & 0 
         \end{tikzcd}
         &
        \begin{tikzcd}[bend right=10, swap]
         & \F_2^2 \ar[dl, "2p_1"] \ar[d, "2\nabla"] \ar[dr, "2p_2" pos=0.75] & \\
         \Z^\sigma\!/4 \ar[dr,->>] \ar[ur, "i_2 q"  pos=0.25 ,color=\inductioncolor]  & \Z^\sigma\!/4 \ar[d,->>] \ar[u, "\Delta q",color=\inductioncolor]  & \Z^\sigma\!/4 \ar[ul, "i_1 q",color=\inductioncolor]  \ar[dl, ->>] \\
          & \Z/2  \ar[ul,"2",color=\inductioncolor] \ar[u,"2",color=\inductioncolor] \ar[ur,"2",color=\inductioncolor] \\
          & 0 
         \end{tikzcd}
         \\
         \hline
\end{tabular} 
\end{adjustbox}
}
\end{table}

\clearpage

\section{Charts}

We include below charts for $\upi_{\blacklozenge}\HQ\ulZ$ and $\upi_{\blacklozenge}\HQN$, separated into the positive and negative cones. Shading indicates where the augmentation $N_{C_2}^{Q_8} \ulZ \to \ulZ$ induces an isomorphism. The Mackey functors are as displayed in \zcref{tab-Q8Mackey}.

\begin{figure}[H]
\caption{The positive cone of $\upi_{\blacklozenge} \HQ  \ulZ$}
\label{fig:posconeZ}
\includegraphics[width=0.8\textwidth]{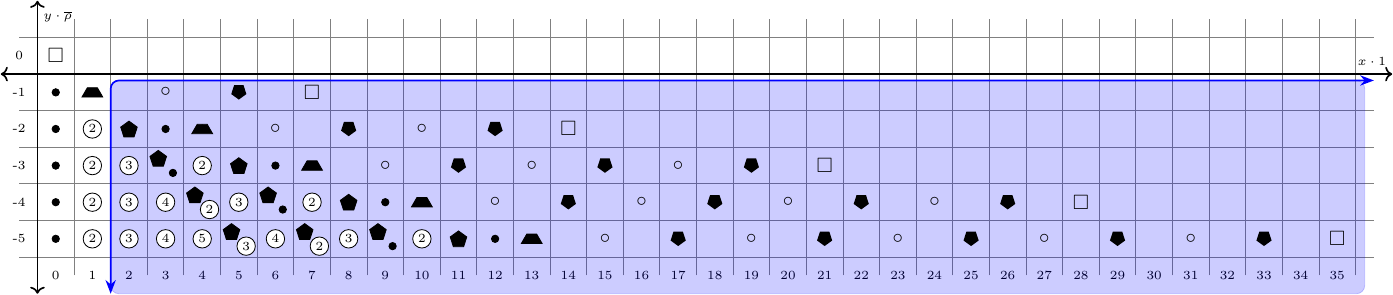}
\end{figure}

\begin{figure}[H]
\caption{The positive cone of $\upi_{\blacklozenge} \HQ N_{C_2}^{Q_8} \ulZ$}
\label{fig:posconeN}
\includegraphics[width=0.8\textwidth]{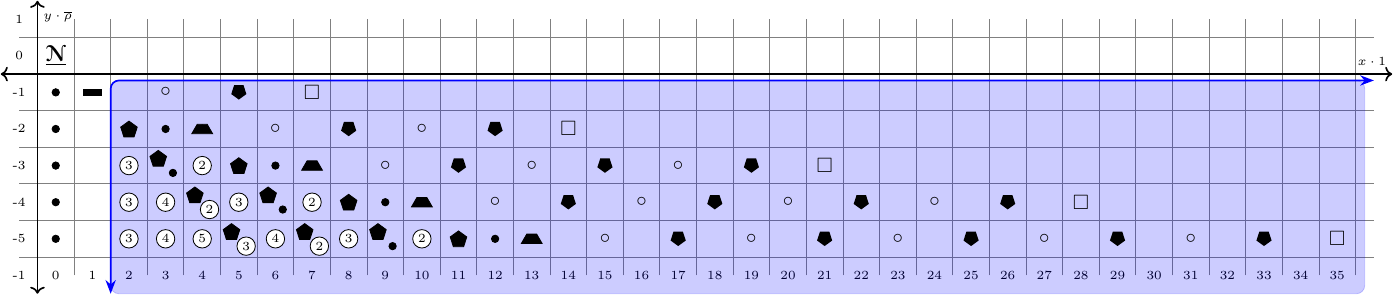}
\end{figure}

\begin{figure}[H]
\caption{The negative cone of $\upi_{\blacklozenge} \HQ \ulZ$}
\label{fig:negconeZ}
\includegraphics[width=0.8\textwidth]{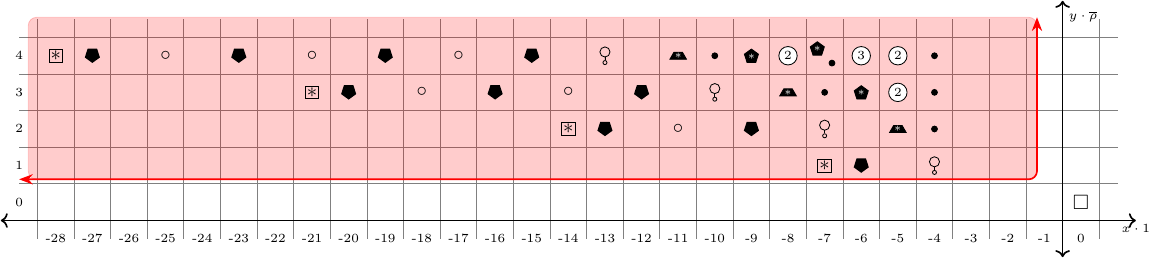}
\end{figure}

\begin{figure}[H]
\caption{The negative cone of $\upi_{\blacklozenge} \HQ N_{C_2}^{Q_8} \ulZ$}
\label{fig:negconeN}
\includegraphics[width=0.8\textwidth]{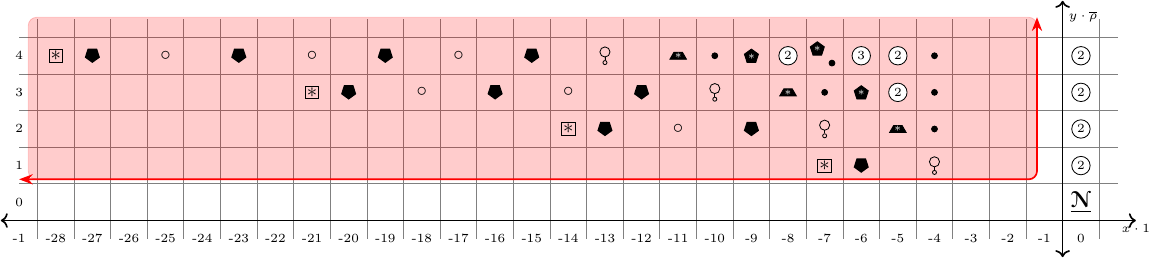}
\end{figure}

\clearpage

\bibliographystyle{amsalpha}

\end{document}